\pgfplotsset{compat=1.18} 
\DeclareMathOperator{\R}{\mathbb{R}}
\DeclareMathOperator{\N}{\mathbb{N}}
\DeclareMathOperator{\Z}{\mathbb{Z}}
\DeclareMathOperator{\Q}{\mathbb{Q}}
\newtheorem{thm}{Theorem}
\newtheorem{lem}{Lemma}[section]
\newtheorem{conj}{Conjecture}
\newtheorem{cor}[thm]{Corollary}
\newtheorem{proposition}[lem]{Proposition}
\newtheorem*{rem*}{Remark}
\newtheorem*{thm*}{Theorem}
\title{\bf The Duffin-Schaeffer Conjecture for multiplicative Diophantine approximation}
\author{Lorenz Fr\"uhwirth and Manuel Hauke}
\newenvironment{dedication}
        {\vspace{6ex}\begin{quotation}\begin{center}\begin{em}}
        {\par\end{em}\end{center}\end{quotation}}
\date{}
\begin{document}


\maketitle
\vspace{-15mm}

\begin{dedication}
Dedicated to the memory of Aleksandr Khintchine, on the occasion of the
100th anniversary of his foundational theorem in metric Diophantine
approximation.
\end{dedication}

\begin{abstract}
Given a monotonically decreasing $\psi: \N \to [0,\infty)$, Khintchine's Theorem provides an efficient tool to decide whether, for almost every $\alpha \in \R$, there are infinitely many $(p,q) \in \Z^2$ such that
$\left\lvert \alpha - \frac{p}{q}\right\rvert \leq \frac{\psi(q)}{q}$. The recent result of Koukoulopoulos and Maynard provides an elegant way of removing monotonicity when only counting reduced fractions. Gallagher showed a multiplicative higher-dimensional generalization to Khintchine's Theorem, again assuming monotonicity. 
In this article, we prove the following Duffin-Schaeffer-type result for multiplicative approximations: For any $k\geq 1$, any function $\psi: \N \to [0,1/2]$ (not necessarily monotonic) and almost every $\alpha \in \mathbb{R}^k$, there exist infinitely many $q$ such that $\prod\limits_{i=1}^k \left\lvert \alpha_i - \frac{p_i}{q}\right\rvert \leq \frac{\psi(q)}{q^k}, p_1,\ldots,p_k$ all coprime to $q$, if and only if 
\[\sum\limits_{q \in \N} \psi(q) \left(\frac{\varphi(q)}{q} \right)^k\log \left(\frac{q}{\varphi(q)\psi(q)}\right)^{k-1} = \infty.\]
This settles a conjecture of Beresnevich, Haynes, and Velani.
\end{abstract}

\section{Introduction and main results}

\subsection{The metric theory of Diophantine approximation}\label{classical_DA}
Given a function $\psi: \N \to [0,\infty)$ and a (typically irrational) number $\alpha \in [0,1]$, the main goal of Diophantine approximation is to try to understand whether there are infinitely many $(a,q) \in \Z \times \N$ such that
\begin{equation}
\label{central_question}
\left\lvert \alpha - \frac{a}{q}\right\rvert \leq \frac{\psi(q)}{q}.
\end{equation}
In case of $\psi(q)=\frac{1}{q}$, Dirichlet's Theorem shows that \eqref{central_question} holds for all $\alpha \in [0,1]$. This is known to be asymptotically sharp for badly approximable numbers, but Khintchine's Theorem tells us that we can improve upon Dirichlet's Theorem for Lebesgue almost every $\alpha \in \R$:

\begin{thm*}[Khintchine, 1924]
    Let $\psi: \N \to [0,\infty)$ be a \textbf{monotonically decreasing} function.
   Writing 
   \[S(\psi) := \left\{\alpha \in [0,1]: \lVert q\alpha \rVert \leq \psi(q), \text{ for i.m. }q \in \N\right\},\] we have
    \[
   \lambda_1\left(S(\psi)\right)
   = \begin{cases}
       1, &\text{ if } \sum\limits_{q \in \N} \psi(q) = \infty,\\
       0, &\text{ if } \sum\limits_{q \in \N}\psi(q) < \infty.
   \end{cases}\]
\end{thm*}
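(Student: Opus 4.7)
The convergence direction is routine: setting $A_q := \{\alpha \in [0,1] : \|q\alpha\| \leq \psi(q)\}$, we have $S(\psi) = \limsup_q A_q$, and each $A_q$ is a union of at most $q+1$ intervals of half-length $\psi(q)/q$ centred at the rationals $a/q$, so $\lambda_1(A_q) = \min(2\psi(q),1) \leq 2\psi(q)$. If $\sum_q \psi(q) < \infty$, the first Borel--Cantelli lemma immediately yields $\lambda_1(S(\psi)) = 0$, with no use of monotonicity.

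For the divergence direction, I would run a second-moment argument. After discarding the $q$ for which $\psi(q) \geq 1/2$ (at worst this changes a tail and does not affect divergence after replacing $\psi$ by $\min(\psi, 1/2)$), we have $\sum_q \lambda_1(A_q) \asymp \sum_q \psi(q) = \infty$. The Chung--Erd\H{o}s inequality then gives
\[
\lambda_1\bigl(\limsup_q A_q\bigr) \;\geq\; \limsup_{N \to \infty} \frac{\bigl(\sum_{q=1}^N \lambda_1(A_q)\bigr)^2}{\sum_{q,r=1}^N \lambda_1(A_q \cap A_r)},
\]
so it suffices to establish the quasi-independence bound
\[
\sum_{q,r \leq N} \lambda_1(A_q \cap A_r) \;\ll\; \Bigl(\sum_{q \leq N} \lambda_1(A_q)\Bigr)^{2}.
\]
Once positive measure for $S(\psi)$ is secured, a zero-one law (either Cassels' lemma, or Gallagher's ergodic theorem for the irrational rotation action) upgrades $\lambda_1(S(\psi))$ from positive to full.

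The main obstacle is the overlap estimate. For $q \neq r$ with $d := \gcd(q,r)$, an interval around $a/q$ and one around $b/r$ meet only when $\lvert bq - ar\rvert \leq q\psi(r) + r\psi(q)$, and the nonzero values of $bq - ar$ occur in multiples of $d$. A careful counting produces a bound of the shape
\[
\lambda_1(A_q \cap A_r) \;\ll\; \lambda_1(A_q)\lambda_1(A_r) \;+\; \frac{d\bigl(\psi(q) + \psi(r)\bigr)}{qr},
\]
where the second term reflects honest failures of independence along the locus $\gcd(q,r)$ large. Summing this bound over $q,r \leq N$ is where the monotonicity hypothesis becomes essential: grouping the pairs $(q,r)$ dyadically and using $\psi(q) \leq \tfrac{2}{q}\sum_{q/2 < s \leq q} \psi(s)$, one can absorb the diagonal contribution into $\bigl(\sum_q \lambda_1(A_q)\bigr)^2$ and verify the quasi-independence inequality. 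Finding a substitute for this monotonicity step is, of course, the entire motivation for the Duffin--Schaeffer-type statements pursued in the rest of this paper.
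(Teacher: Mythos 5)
The paper itself does not prove this statement (it is quoted as classical background), so the only question is whether your sketch would actually work, and there is a genuine gap in the divergence half: the quasi-independence inequality you reduce to, namely $\sum_{q,r\leq N}\lambda_1(A_q\cap A_r)\ll\bigl(\sum_{q\leq N}\lambda_1(A_q)\bigr)^2$, is \emph{false} for the non-reduced sets $A_q$, even for monotone $\psi$. The trouble is the $\gcd$ term you wrote down, which is also misstated: if $d=\gcd(q,r)$ and $q<r$, the centres $a/q$ and $b/r$ coincide at the $d$ fractions $c/d$, and each coincidence already forces an overlap of length $2\min\{\psi(q)/q,\psi(r)/r\}=2\psi(r)/r$, so the honest error term is $\asymp d\,\psi(r)/r$, not $d(\psi(q)+\psi(r))/(qr)$. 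This term cannot be absorbed. Concretely, take $\psi(q)=1/(q\log q)$: then $\sum_{q\leq N}\lambda_1(A_q)\asymp\log\log N$, while $\sum_{q<r\leq N}\gcd(q,r)\,\psi(r)/r\gg\log N$ (since $\sum_{r\leq N}\sum_{q<r}\gcd(q,r)\asymp N^2\log N$, and summing dyadically against the decreasing weight $\psi(r)/r$ gives $\gg 1$ per dyadic block). So the pair-correlation sum grows like $\log N$ against a first-moment square of size $(\log\log N)^2$, the Chung--Erd\H{o}s ratio tends to $0$, and your dyadic monotonicity trick $\psi(q)\leq\tfrac{2}{q}\sum_{q/2<s\leq q}\psi(s)$ cannot rescue it, because the large correlations are a real feature of non-reduced fractions, not an artefact of the estimate. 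This is precisely the phenomenon behind the Duffin--Schaeffer counterexample discussed in Section \ref{classical_DA}: monotonicity does not make the full family $(A_q)_q$ quasi-independent; it makes a \emph{different} argument available.

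The convergence half and the appeal to a zero-one law (Cassels' lemma applies to $S(\psi)$ with no monotonicity) are fine. To repair the divergence half you must change strategy rather than sharpen the overlap bound: either follow Khintchine's original route through continued fractions, or run the second-moment argument only on a subfamily where the coincident-centre terms are controlled (e.g.\ prime denominators, as in Erd\H{o}s' work), or---closest in spirit to this paper---use monotonicity and partial summation to deduce $\sum_q\varphi(q)\psi(q)/q=\infty$ from $\sum_q\psi(q)=\infty$ and then invoke the reduced-fraction theory ($D(\psi)\subseteq S(\psi)$), where overlap estimates of Pollington--Vaughan/Erd\H{o}s--Vaaler/Koukoulopoulos--Maynard type genuinely hold.
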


Here and in what follows ``i.m.'' stands for ``infinitely many'', $\lambda_k$ denotes the $k$-dimensional Lebesgue measure and $\lVert . \rVert$ is the distance to the next integer (see Section \ref{notation} for proper definitions and notations).
In their seminal article, Duffin and Schaeffer \cite{Duffin_Schaeffer_1941} constructed a (non-monotonic) function $\psi$ such that $ \sum_{q \in \N} \psi(q) = \infty$, but for almost every $\alpha$, \eqref{central_question} has only finitely many solutions. Their example makes heavy use of the fact that for given $x \in \Q$, there are many different pairs $(a,q) \in \Z^2$ such that $\frac{a}{q} = x$. To resolve this issue, Duffin and Schaeffer suggested to only consider reduced fractions to make this representation unique.
To formalize this, let 
\[\lVert qx\rVert' := \min_{\gcd(p,q) =1}\lvert qx -p \rvert\]
denote the best approximation by coprime fractions and for given $\psi: \N \to [0,\infty)$, consider the set

\[D(\psi) := \left\{\alpha \in [0,1]: \lVert q\alpha \rVert' \leq \psi(q) \text{ for i.m. }q \in \N\right\}.
\]
Consequently, Duffin and Schaeffer conjectured that for any $\psi: \N \to [0,\infty)$, we have
\[\lambda_1\left(D(\psi)\right)
   = \begin{cases}
       1, &\text{ if } \sum\limits_{q \in \N} \psi(q)\frac{\varphi(q)}{q} = \infty,\\
       0, &\text{ if } \sum\limits_{q \in \N}\psi(q)\frac{\varphi(q)}{q} < \infty.
   \end{cases}\]

This became one of the most famous open questions in Diophantine approximation and after contributions made in, e.g.,
\cite{Aistleitner_2014,Aistleitner_Lachmann_Munsch_Technau_Zafeiropoulos_2019,Beresnevich_Harman_Haynes_Velani_2013,Erdos_1970,Gallagher_1961,Haynes_Pollington_Velani_2012,PollingtonVaughan1990,Vaaler_1978}, it was finally proven by a breakthrough result of Koukoulopoulos and Maynard \cite{Koukoulopoulos_Maynard_2020}.

\subsection{The approximation theory in higher dimensions}

When looking at Diophantine approximation in dimension $k \geq 1$, there are several classical and well-studied measures of approximation quality considered. We will highlight the two topics that are likely the most extensively studied: One the one hand there is the simultaneous approximation where one considers the set

\[S_k(\psi) := \left\{\alpha \in [0,1]^k: \max\limits_{1 \leq i \leq k}\lVert q\alpha_i \rVert \leq \psi(q) \text{ for i.m. }q \in \N\right\},\]
and its coprime counterpart
\[D_k(\psi):= \left\{\alpha \in [0,1]^k: \max\limits_{1 \leq i \leq k}\lVert q\alpha_i \rVert'  \leq \psi(q) \text{ for i.m. }q \in \N\right\}.
\]
Another well-researched area concerns multiplicative Diophantine approximation, where one examines the set

\[S_k^{\times}(\psi) := \left\{\alpha \in [0,1]^k: \prod\limits_{i=1}^k\lVert q\alpha_i \rVert \leq \psi(q) \text{ for i.m. }q \in \N\right\},\]
and its coprime counterpart
\[D_k^{\times}(\psi):= \left\{\alpha \in [0,1]^k: \prod\limits_{i=1}^k\lVert q\alpha_i \rVert' \leq \psi(q) \text{ for i.m. }q \in \N\right\}.
\]
The case of simultaneous approximation is metrically well understood: A straightforward adaptation of Khintchine's Theorem shows that for monotonic $\psi$ and any $k \in \N$,
\[\lambda_k(S_k(\psi)) = \begin{cases}
       1, &\text{ if } \sum\limits_{q \in \N} \psi(q)^k = \infty,\\
       0, &\text{ if } \sum\limits_{q \in \N}\psi(q)^k < \infty.
       \end{cases}\]
Gallagher \cite{Gallagher_1962} proved that for $k \geq 2$, the monotonicity assumption can be removed, which is, considering the Duffin-Schaeffer counterexample, in contrast to the case $k = 1$.
In the coprime setup, Pollington and Vaughan \cite{PollingtonVaughan1990} proved that for 
any (not necessarily monotonic) function $\psi$ and $k \geq 2$,
\[\lambda_k(D_k(\psi)) = \begin{cases}
       1, &\text{ if } \sum\limits_{q \in \N} \left(\frac{\varphi(q)\psi(q)}{q}\right)^k= \infty,\\
       0, &\text{ if } \sum\limits_{q \in \N} \left(\frac{\varphi(q)\psi(q)}{q}\right)^k < \infty, 
       \end{cases}\]
       and by the much later result of Koukoulopoulos and Maynard \cite{Koukoulopoulos_Maynard_2020}, this also holds true in the case of $k = 1$.\\

We now direct our focus towards the investigation of multiplicative Diophantine approximation, a subject that has attracted considerable attention in recent years, largely due to its connection to a well-known unsolved problem: Littlewood's conjecture postulates that for any $\varepsilon > 0$ and $\psi_{\varepsilon}(q) := \frac{\varepsilon}{q}$, we have
$\lim_{\varepsilon \to 0} S_2^{\times}(\psi_{\varepsilon}) = \emptyset$.
Although Einsiedler, Katok, and Lindenstrauss \cite{Einsiedler2006} could show that the Hausdorff dimension of $\lim_{\varepsilon \to 0} S_2^{\times}(\psi_{\varepsilon})$ is $0$, the conjecture itself remains widely open. Clearly, the result in \cite{Einsiedler2006} implies that for almost every $(\alpha,\beta) \in [0,1]^2$, $\liminf_{n \to \infty} n\lVert n\alpha \rVert \lVert n\beta \rVert = 0$ (which can already be deduced from Khintchine's Theorem), but the rate of approximation can be improved in the multiplicative setup when allowing an exceptional set of (two-dimensional) Lebesgue measure $0$. For various adaptations of this result, we refer the reader to \cite{BadVel2011} and the references therein.
In the metric question, Gallagher \cite{Gallagher_1962} proved the following, assuming monotonicity of the approximation function\footnote{We remark that in the actual statement of Gallagher, there is no restriction on $\psi$ (clearly for $\psi(q) \geq 1/2$, the question becomes trivial), and $\log(1/\psi(q))$ is replaced by $\log(q)$. This is because in the monotonic case, the critical regime is $\log(q) \asymp \log(1/\psi(q))$, which is no longer true when $\psi(q)$ fails to be monotonic. The statement in the form given above was used already in several subsequent works such as \cite{Beresnevich_Haynes_Velani2013,Kristensen_2004,Wang_Yu1981}.
Actually, Beresnevich, Haynes, and Velani \cite{Beresnevich_Haynes_Velani2013} constructed a non-monotonic counterexample where for $k \geq 2$, $\sum\limits_{q \in \N} \psi(q)^k\log(q)^k = \infty$, but $\lambda_k(S_k^{\times}(\psi)) = 0$.}.

\begin{thm*}[Gallagher, 1962]
   Let $\psi: \N \to [0,1/2]$ be a \textbf{monotonically decreasing} function. Then for any $k \geq 1$, we have
   \[\lambda_k\left(S_k^{\times}(\psi)\right)
   = \begin{cases}
       1, &\text{ if } \sum\limits_{q \in \N} \psi(q) \log(1/\psi(q))^{k-1} = \infty,\\
       0, &\text{ if } \sum\limits_{q \in \N}\psi(q) \log(1/\psi(q))^{k-1} < \infty.
   \end{cases}
   \]
\end{thm*}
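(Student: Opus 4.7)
The plan is to apply the convergent half of the Borel--Cantelli lemma to $A_q := \{\alpha \in [0,1]^k : \prod_{i=1}^k \|q\alpha_i\| \leq \psi(q)\}$. Each $A_q$ is a disjoint union of $q^k$ congruent hyperbolic cells around the rational points $(a_1/q, \ldots, a_k/q)$; after rescaling by $q$, a single cell has volume equal to that of $\{y \in [-1/2, 1/2]^k : \prod|y_i| \leq \psi(q)\}$, which by the classical hyperbolic volume formula is $2^k \psi(q) \sum_{j=0}^{k-1}(\log(1/(2^k\psi(q))))^j/j! \asymp \psi(q)(\log(1/\psi(q)))^{k-1}$ as $\psi(q) \to 0$. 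Incorporating the Jacobian $q^{-k}$ and summing over the $q^k$ cells yields $\lambda_k(A_q) \asymp \psi(q)(\log(1/\psi(q)))^{k-1}$, so under the convergent hypothesis the first Borel--Cantelli lemma delivers $\lambda_k(S_k^\times(\psi)) = 0$.

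\textbf{Divergence direction.} I would proceed by induction on $k$, with $k=1$ being Khintchine's Theorem itself. For the inductive step, Fubini reduces the claim $\lambda_k(S_k^\times(\psi)) = 1$ to showing, for a.e.\ $\alpha_1 \in [0,1]$, that the set
\[B(\alpha_1) := \Big\{(\alpha_2,\ldots,\alpha_k) \in [0,1]^{k-1} : \prod_{i \geq 2}\|q\alpha_i\| \leq \psi(q)/\|q\alpha_1\| \text{ for i.m.\ } q\Big\}\]
has full $(k-1)$-dimensional measure. Since the induced approximation function $\tilde\psi_{\alpha_1}(q) := \psi(q)/\|q\alpha_1\|$ is not monotonic, the inductive hypothesis does not apply directly, so I would slice dyadically on $\|q\alpha_1\|$: for each $0 \leq j \leq \log_2(1/\psi(q))$, let $Q_j := \{q : \|q\alpha_1\| \in [2^{-j-1}, 2^{-j}]\}$, on which $\tilde\psi_{\alpha_1}(q) \asymp 2^j\psi(q)$ is monotonic in $q$. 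By equidistribution of $\{q\alpha_1\}$ (valid for a.e.\ $\alpha_1$), $Q_j$ has natural density $\asymp 2^{-j}$; invoking the $(k-1)$-dimensional inductive hypothesis on each dyadic level and summing over $j$ in a Riemann-sum fashion gives, at least heuristically, $\sum_j \sum_q \psi(q)(\log(1/\psi(q)) - j\log 2)^{k-2} \asymp \sum_q \psi(q)(\log(1/\psi(q)))^{k-1} = \infty$, which triggers the conclusion of the inductive hypothesis at each level.

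\textbf{Main obstacle.} The dyadic reduction rests on a $(k-1)$-dimensional Gallagher-type statement restricted to the sparse subsequence $Q_j \subseteq \N$, which is not literally the inductive hypothesis. Justifying this restriction --- in particular, verifying that the density of $Q_j$ really equals $2^{-j}$ uniformly over the relevant ranges of $q$, and ensuring that the induced ``DS-sum along $Q_j$'' still diverges to yield the full-measure conclusion for the $(k-1)$-dimensional problem --- is the principal technical step. The monotonicity of $\psi$ is indispensable here: it ensures that the quasi-independence estimates underlying the $(k-1)$-dimensional case survive along each $Q_j$ without invoking any coprimality condition. As the footnote preceding the theorem statement emphasises, the multiplicative problem genuinely fails for non-monotonic $\psi$ in this unrestricted setting, which is precisely the starting point for the main theorem of the present paper and motivates the passage to the coprimality condition $\|\cdot\|'$ appearing there.
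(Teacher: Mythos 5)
Your convergence half is fine: $A_q$ splits into $q^k$ disjoint hyperbolic cells, each of measure $\asymp q^{-k}\psi(q)\log(1/\psi(q))^{k-1}$, so the first Borel--Cantelli lemma gives measure zero under convergence; this is exactly the volume computation the paper itself sketches in Section \ref{conv_case} (for $k=2$), and it needs no monotonicity. Note, though, that the paper does not prove this theorem at all --- it is quoted as classical background with a citation to \cite{Gallagher_1962} --- so the only thing to assess is your divergence argument, and there the gap you flag as the ``main obstacle'' is genuine and is not a technicality.

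After fixing $\alpha_1$ and slicing dyadically, the lower-dimensional problem you need is governed by the function $q \mapsto \mathds{1}_{Q_j}(q)\,2^{j}\psi(q)$, supported on the sparse set $Q_j=\{q: \lVert q\alpha_1\rVert \asymp 2^{-j}\}$; this is not monotonic, so the inductive hypothesis does not apply. For $k=2$ the base case you would need is Khintchine along the subsequence $Q_j$, i.e.\ a non-monotonic Khintchine statement, which is false in general by the Duffin--Schaeffer counterexample; for $k-1\geq 2$ the Beresnevich--Haynes--Velani counterexample quoted in the footnote rules out the unrestricted non-monotonic statement as well. Any rescue must exploit the specific metric structure of the fibres $Q_j$ for a.e.\ $\alpha_1$ (a quasi-independence input), which the density count $\#(Q_j\cap[1,N])\asymp 2^{-j}N$ does not provide. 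Moreover, even granting such a restricted statement, the ``Riemann-sum'' step does not trigger it: divergence of $\sum_q \psi(q)\log(1/\psi(q))^{k-1}$ only says that the double sum over pairs $(j,\,q\in Q_j)$ diverges, and since there are infinitely many classes the inner sum can be finite (even uniformly bounded) for every fixed $j$, so no single dyadic level satisfies the divergence hypothesis. To conclude you would have to treat all levels simultaneously, i.e.\ handle the genuinely non-monotonic induced function $\psi(q)/\lVert q\alpha_1\rVert$ itself --- which is circular, being a statement of at least the same difficulty as the theorem (and of the same nature as the results this paper builds its machinery for). Gallagher's own argument avoids fibering and induction on dimension entirely: it works directly with the $k$-dimensional star-shaped sets $A_q$, and the hypothesis it leans on is precisely the monotonicity of the measures $\lambda_k(A_q)$, not a lower-dimensional approximation statement along subsequences.
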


Roughly speaking, Gallagher's Theorem shows that for almost all pairs $(\alpha,\beta)$, we can surpass Littlewood's conjecture by approximately a factor of $\log(q)^2$, and this bound is optimal up to double-logarithmic terms. Furthermore, for each added variable, we almost surely gain another power of $\log(q)$.

In view of the counterexample of Duffin and Schaeffer \cite{Duffin_Schaeffer_1941} for a non-monotonic version of Khintchine's Theorem, the question arises whether monotonicity is a necessary assumption in Gallagher's Theorem.
Since in dimension $k=1$, the multiplicative and simultaneous setups coincide, i.e.
$S_1(\psi) = S_1^{\times}(\psi), D_1(\psi) = D_1^{\times}(\psi)$, 
we know that in the $1$-dimensional case, monotonicity cannot be removed from Gallagher's Theorem for $S_1^{\times}(\psi)$, but it is possible to remove it for $D_1^{\times}(\psi)$ when changing the divergence condition by a factor of $\frac{\varphi(q)}{q}$.
Thus, the open question reduces to the case where $k \geq 2$. Inspired by a different result of Gallagher \cite{Gallagher_1962} and Pollington-Vaughan \cite{PollingtonVaughan1990} for the simultaneous approximation, the following two conjectures (for coprime and non-coprime setup) were posed by Beresnevich, Haynes and Velani \cite{Beresnevich_Haynes_Velani2013}:

\begin{conj}\label{conj_normal}
   Let $\psi: \N \to [0,1/2]$ be an arbitrary function. Then for any $k \geq 2$, we have
   \[\lambda_k\left(S_k^{\times}(\psi)\right)
   = \begin{cases}
       1, &\text{ if } \sum\limits_{q \in \N} \psi(q) \log(1/\psi(q))^{k-1} = \infty,\\
       0, &\text{ if } \sum\limits_{q \in \N}\psi(q) \log(1/\psi(q))^{k-1} < \infty.
   \end{cases}
   \]
\end{conj}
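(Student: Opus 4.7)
The plan is to split the proof into the usual two directions, with convergence following from a routine volume estimate and Borel-Cantelli, and divergence reduced (as far as possible) to the coprime multiplicative Duffin-Schaeffer theorem proved in the body of the paper. For convergence, the key input is the classical estimate
\[\lambda_k\!\left(\left\{\alpha \in [0,1]^k : \prod\nolimits_{i=1}^k \lVert q\alpha_i\rVert \leq \psi(q)\right\}\right) \asymp \psi(q)\log(1/\psi(q))^{k-1},\]
obtained by unfolding periodicity and computing the volume of the hyperbolic region $\{y \in [-1/2,1/2]^k : \prod_i |y_i| \leq \psi(q)\}$; the first Borel-Cantelli lemma then yields $\lambda_k(S_k^\times(\psi))=0$ when the series converges.

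For divergence, I would define the extracted function $\Psi(q') := \max_{\ell\geq 1}\psi(\ell q')/\ell^k$ and observe that any coprime approximation $\prod_i\lVert q'\alpha_i\rVert' \leq \Psi(q')$ rescales---by multiplying the coprime numerators and the denominator by the optimal $\ell^*=\ell^*(q')$---into a non-coprime approximation at denominator $\ell^* q'$ with product error at most $\psi(\ell^* q')$. This yields the inclusion $D_k^\times(\Psi)\subseteq S_k^\times(\psi)$, so by the paper's coprime theorem it suffices to establish the arithmetic implication
\[\sum_q \psi(q)\log(1/\psi(q))^{k-1} = \infty \;\Longrightarrow\; \sum_{q'}\Psi(q')\bigl(\varphi(q')/q'\bigr)^k \log\bigl(q'/(\varphi(q')\Psi(q'))\bigr)^{k-1} = \infty.\]
I would prove this by a case split on $\varphi(q)/q$: when $\varphi(q)/q \geq c$, the trivial choice $\ell=1$ already gives $\Psi(q)\geq \psi(q)$ and the logarithmic factor on the right is comparable to $\log(1/\psi(q))$ (using $q/\varphi(q)\leq 1/c$); when $\varphi(q)/q$ is small, one strips small prime-power factors from $q$ to produce a divisor $\ell_0(q)\mid q$ with $\varphi(q/\ell_0)/(q/\ell_0)\geq 1/2$ and transfers the contribution to $q' := q/\ell_0$, losing only a factor of $\ell_0^k$, which is tame so long as $\ell_0 = q^{o(1)}$.

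The main obstacle is the pathological case where $\psi$ is heavily concentrated on integers with vanishing $\varphi(q)/q$ (e.g.\ primorials), for which the simple reduction above can lose an arbitrary power of $\varphi(q)/q$ and no obvious choice of $\Psi$ captures the full divergent weight of the non-coprime sum. In such situations one likely needs a direct analysis of the overlap structure $\lambda_k(A_{q_m}\cap A_{q_n})$ for divisibility-linked pairs $q_m\mid q_n$, showing that the pairwise overlap is close to the independent prediction $\lambda_k(A_{q_m})\lambda_k(A_{q_n})$ for most pairs and concluding via a Chung-Erd\H{o}s-type second moment estimate. This is the multiplicative/hyperbolic analogue of the Koukoulopoulos-Maynard overlap machinery; carrying it out requires adapting the GCD-graph framework from Euclidean balls to the non-convex hyperbolic boxes that arise in the multiplicative setup---presumably the same analytic heart that powers the proof of the coprime theorem in the body of the paper.
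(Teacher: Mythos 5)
There is a genuine gap, and it is worth being clear that the statement you are trying to prove is Conjecture \ref{conj_normal}, which the paper does \emph{not} prove: it is explicitly left open (see Section \ref{open_q}), the paper's Theorem \ref{main_thm} settles only the coprime analogue — and does so with a \emph{corrected} criterion containing $\log\left(\frac{q}{\varphi(q)\psi(q)}\right)^{k-1}$ rather than $\log(1/\psi(q))^{k-1}$ — and the best the paper extracts for $S_k^{\times}(\psi)$ is Corollary \ref{weak_thm}, which requires the stronger divergence hypothesis \eqref{extra_div}. Your convergence half is fine (the volume estimate $\lambda_k(A_q)\asymp \psi(q)\log(1/\psi(q))^{k-1}$ plus Borel--Cantelli is exactly how the known convergence part goes), and your inclusion $D_k^{\times}(\Psi)\subseteq S_k^{\times}(\psi)$ with $\Psi(q'):=\sup_{\ell}\psi(\ell q')/\ell^k$ is correct. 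The failure is in the claimed arithmetic implication. In the regime where $\psi$ is concentrated on integers with $\varphi(q)/q\to 0$ (your ``pathological case''), the implication is simply false: take $\psi$ supported on the primorials $q_y=\prod_{p\leq y}p$ with $\psi(q_y)\log(1/\psi(q_y))^{k-1}=1/m$ for the $m$-th primorial. The left-hand series diverges, but on the right-hand side either $\ell=1$ (or a small divisor) is used, in which case the factor $(\varphi(q')/q')^k\asymp (\log\log q_y)^{-k}$ makes the transferred terms of size $\ll 1/(m(\log m)^k)$, summable; or one strips enough small primes to make $\varphi(q')/q'\geq 1/2$, which forces $\ell_0\geq \exp(c\, y^{c'})$ and hence a loss of $\ell_0^{-k}$ that is super-polylogarithmic in $q_y$, again summable. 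So no choice of $\Psi$ in your reduction recovers the divergent weight, and this is not an artifact of the method: the paper's analysis of $A_q'$ versus $A_q''$ (Section \ref{conv_case}) shows the coprime sets genuinely carry only measure $\asymp(\varphi(q)/q)^k\psi(q)\log\left(\frac{q}{\varphi(q)\psi(q)}\right)^{k-1}$, so the coprime theorem cannot see the extra mass present in the non-coprime sets for such $q$.

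Your final paragraph acknowledges this and proposes a ``direct analysis of the overlap structure'' for the non-coprime hyperbolic sets via a Koukoulopoulos--Maynard/GCD-graph adaptation, but that is a restatement of the open problem rather than an argument: the divisibility-linked pairs $q\mid r$ with $\varphi(q)/q$ small are exactly where the second-moment/overlap bounds are not known (and where Gallagher-type zero--one laws give no purchase without monotonicity). Note also that the BHV result quoted in the paper (condition \eqref{DST_cond}) already covers your ``easy case'' where $\varphi(q)/q$ is bounded below on the support of $\psi$, so the content of your proposal beyond known results is precisely the part that is missing. As it stands, the proposal proves the convergence half and, in the divergence half, only the cases already implied by \cite{Beresnevich_Haynes_Velani2013} and Corollary \ref{weak_thm}; it does not prove Conjecture \ref{conj_normal}.
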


\begin{conj}\label{conj_coprime}
       Let $\psi: \N \to [0,1/2]$ be an arbitrary function. Then for any $k \geq 1$, we have 
   \[\lambda_k\left(D_k^{\times}(\psi)\right)
   = \begin{cases}
       1, &\text{ if } \sum\limits_{q \in \N} \left(\frac{\varphi(q)}{q}\right)^k \psi(q) \log(1/\psi(q))^{k-1} = \infty,\\
       0, &\text{ if } \sum\limits_{q \in \N}\left(\frac{\varphi(q)}{q}\right)^k \psi(q) \log(1/\psi(q))^{k-1} < \infty.
   \end{cases}
   \]
\end{conj}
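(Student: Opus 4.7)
Writing
\[
A_q = \bigcup_{\substack{p_1,\dots,p_k\\ \gcd(p_i,q)=1}} \Bigl\{\alpha \in [0,1]^k : \prod_{i=1}^k |\alpha_i - p_i/q| \le \psi(q)/q^k\Bigr\},
\]
so that $D_k^{\times}(\psi) = \limsup_q A_q$, the first task is to establish
\[
\lambda_k(A_q) \asymp \left(\frac{\varphi(q)}{q}\right)^{\!k} \psi(q)\, \log\!\left(\tfrac{1}{\psi(q)}\right)^{\!k-1}.
\]
The upper and lower bounds come from a dyadic slicing of the hyperbolic region $\{y\in\R_{\ge 0}^k : \prod y_i \le \psi(q)\}$: for each tuple $\vec n\in\Z_{\ge 0}^k$ with $\sum n_i \approx \log_2(1/\psi(q))$, the piece $A_{q,\vec n}$ in which $\lVert q\alpha_i\rVert' \asymp 2^{-n_i}$ for every $i$ factors as a product of one-dimensional coprime neighbourhoods of total $k$-volume $\asymp (\varphi(q)/q)^k\,\psi(q)$, and the number of admissible tuples is $\asymp \log(1/\psi(q))^{k-1}$. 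The convergence direction of Conjecture~\ref{conj_coprime} is then immediate from the first Borel--Cantelli lemma.

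\textbf{Step 2: Reduction to a second moment estimate.} For divergence, a zero-one law of Cassels--Gallagher type implies $\lambda_k(\limsup A_q)\in\{0,1\}$, so it suffices to establish positive measure. The Chung--Erd\H{o}s inequality
\[
\lambda_k\bigl(\limsup A_q\bigr) \;\ge\; \limsup_{Q\to\infty} \frac{\bigl(\sum_{q\le Q} \lambda_k(A_q)\bigr)^{\!2}}{\sum_{q,q'\le Q} \lambda_k(A_q \cap A_{q'})}
\]
will do the job once one proves the quasi-independence bound
$\sum_{q,q'\le Q} \lambda_k(A_q\cap A_{q'}) \ll \bigl(\sum_{q\le Q}\lambda_k(A_q)\bigr)^{2}$; the numerator tends to infinity by the divergence hypothesis together with the formula of Step~1.

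\textbf{Step 3: Factorisation across coordinates.} Letting $B_{q,r}:=\bigcup_{\gcd(p,q)=1}[p/q-r/q,\,p/q+r/q]$, the slice $A_{q,\vec n}$ equals the Cartesian product $\prod_{i=1}^k B_{q,2^{-n_i}}$, hence
\[
\lambda_k(A_{q,\vec n}\cap A_{q',\vec n'}) \;=\; \prod_{i=1}^k \lambda_1\bigl(B_{q,2^{-n_i}}\cap B_{q',2^{-n'_i}}\bigr).
\]
After summing over the $\asymp \log(1/\psi(q))^{k-1}\log(1/\psi(q'))^{k-1}$ admissible tuple pairs $(\vec n,\vec n')$, the left hand side of the required overlap inequality becomes a sum of products of one-dimensional coprime-overlap quantities. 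Bounding each factor via a quantitative version of the Koukoulopoulos--Maynard quasi-independence estimate (applied at the appropriate dyadic radii) and comparing with the measure formula of Step~1 should give the desired bound and close the Chung--Erd\H{o}s argument, whereupon the zero-one law of Step~2 upgrades positive measure to full measure.

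\textbf{Main obstacle.} The hardest step is the last: the Koukoulopoulos--Maynard theorem in its usual form delivers $\ell^2$ quasi-independence for a \emph{single} approximating function $\psi$, whereas what is needed here is a uniform overlap estimate of shape $\lambda_1(B_{q,r}\cap B_{q',r'}) \ll (\text{GCD correction})\cdot \tfrac{\varphi(q)\varphi(q')}{qq'}\,rr'$ valid for all radius pairs $(r,r')\in[1/q,1/2]\times[1/q',1/2]$ simultaneously, with at worst polylogarithmic loss when summed over scales. Producing such a scale-free overlap bound will require re-entering the GCD-graph / iterated-compression machinery of Koukoulopoulos--Maynard and tracking the dependence of every estimate on the radii throughout the anatomy-of-integers argument. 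Once this uniform one-dimensional overlap bound is secured, the product factorisation of Step~3 reduces the $k$-dimensional overlap sum to $k$ independent one-dimensional copies and the proof is complete.
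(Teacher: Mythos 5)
The statement you are trying to prove is false, and the paper in fact disproves it: Theorem \ref{main_thm} shows that the correct threshold series is $\sum_{q}(\varphi(q)/q)^{k}\psi(q)\log\bigl(\tfrac{q}{\varphi(q)\psi(q)}\bigr)^{k-1}$, and one can construct $\psi$ (supported on integers $q$ with $q/\varphi(q)$ very large compared to $1/\psi(q)$) for which this series diverges while the series in Conjecture \ref{conj_coprime} converges; for such $\psi$ one has $\lambda_k(D_k^{\times}(\psi))=1$, contradicting the convergence half of the conjecture. The concrete gap sits in your Step 1. Your set identity for $A_q$ is fine, but the claimed asymptotic $\lambda_k(A_q)\asymp(\varphi(q)/q)^k\psi(q)\log(1/\psi(q))^{k-1}$ is wrong: your dyadic slicing implicitly assumes $\lVert q\alpha_i\rVert'\le 1/2$ in every coordinate (tuples $\vec n$ with $n_i\ge 1$ and $\sum n_i\approx\log_2(1/\psi(q))$). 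The coprime distance $\lVert q\alpha\rVert'$ is not bounded by $1/2$: near a fraction $a/q$ with $(a,q)>1$ it can be as large as roughly half the gap to the nearest coprime residue, which is typically of size $q/\varphi(q)$. Thus the hyperbolic ``arms'' emanating from a coprime centre $(p_1/q,\dots,p_k/q)$ reach past neighbouring non-coprime rationals, out to length $\asymp \tfrac{1}{q}\cdot\tfrac{q}{\varphi(q)}$ per coordinate rather than $\tfrac{1}{2q}$, and these portions contribute an extra $\log(q/\varphi(q))$ per coordinate. The correct estimate is $\lambda_k(A_q)\asymp(\varphi(q)/q)^k\psi(q)\log\bigl(\tfrac{q}{\varphi(q)\psi(q)}\bigr)^{k-1}$ (this is exactly the computation behind Proposition \ref{prop_conv_DS}, where the arithmetic--geometric mean inequality identifies equally spaced coprime residues as the extremal gap configuration). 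Consequently your Borel--Cantelli step only yields measure zero when this larger series converges; under the conjecture's convergence hypothesis alone the conclusion can fail, so no repair of Steps 2--3 can save the statement as written.

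For completeness: the divergence half of Conjecture \ref{conj_coprime} is true, but only because $\log\bigl(\tfrac{q}{\varphi(q)\psi(q)}\bigr)\ge\log(1/\psi(q))$, so it is subsumed by Theorem \ref{main_thm}. Note also that the paper's divergence proof does not follow your route (overlap estimates for the $k$-dimensional sets $A_q$ plus a zero-one law, the latter of which you would in any case need to justify in the coprime multiplicative setting). Instead it fixes $\alpha_2,\dots,\alpha_k$, applies the Koukoulopoulos--Maynard theorem in the single variable $\alpha_1$ with the fibred function $\psi(q)/\prod_{i\ge 2}\lVert q\alpha_i\rVert'$, and reduces everything to a second-moment (variance) estimate for functions built from $1/x$-shaped bumps around coprime rationals, approximated from above by averages of indicator functions of intervals; the scale-uniform overlap input you correctly flag as the main obstacle is supplied there by the overlap lemma of the Aistleitner--Borda--Hauke paper together with a careful comparison of the quantities $D$ and $A$ at the different radii (Proposition \ref{D_A_equiv}), rather than by re-running the GCD-graph machinery at every scale.
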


The main purpose of this article is to (dis-)prove Conjecture \ref{conj_coprime}:

\begin{thm}\label{main_thm}
       Let $\psi: \N \to [0,1/2]$ be an arbitrary function. Then for any $k \geq 1$, we have 
   \[\lambda_k\left(D_k^{\times}(\psi)\right)
   = \begin{cases}
       1, &\text{ if } \sum\limits_{q \in \N} \left(\frac{\varphi(q)}{q}\right)^k \psi(q)\log\left(\frac{q}{\varphi(q)\psi(q)}\right)^{k-1} = \infty,\\
       0, &\text{ if } \sum\limits_{q \in \N}\left(\frac{\varphi(q)}{q}\right)^k \psi(q)\log\left(\frac{q}{\varphi(q)\psi(q)}\right)^{k-1}< \infty.
   \end{cases}
   \]
\end{thm}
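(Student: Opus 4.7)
I would first compute the measure of the single-$q$ set
\[
A_q := \left\{\alpha \in [0,1]^k : \prod_{i=1}^k \lVert q\alpha_i \rVert' \leq \psi(q)\right\}
\]
by a dyadic decomposition across coordinates. The one-dimensional level set $\{\alpha \in [0,1] : \lVert q\alpha \rVert' \leq \eta\}$ has Lebesgue measure $\asymp \min(\varphi(q)\eta/q, 1)$, and is therefore non-trivial throughout the enlarged range $\eta \in [0, q/\varphi(q)]$ rather than only $\eta \in [0, 1/2]$. Writing $B_q(\vec{\eta}) := \prod_i \{\alpha_i \in [0,1] : \lVert q\alpha_i\rVert' \leq \eta_i\}$ for dyadic tuples with $\prod_i \eta_i \asymp \psi(q)$ and $\eta_i \lesssim q/\varphi(q)$, there are $\asymp \log(q/(\varphi(q)\psi(q)))^{k-1}$ admissible tuples, each of measure $\asymp (\varphi(q)/q)^k \psi(q)$, which together yield
\[
\lambda_k(A_q) \asymp \left(\frac{\varphi(q)}{q}\right)^k \psi(q) \log\left(\frac{q}{\varphi(q)\psi(q)}\right)^{k-1}.
\]
The precise form of the logarithmic factor, distinct from that of Conjecture \ref{conj_coprime}, is forced by the enlarged effective range of admissible scales $\eta_i$; this is the reason the original conjecture must be amended. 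The convergence direction of the theorem is then immediate from the first Borel--Cantelli lemma applied to $\{A_q\}$.

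For the divergence direction I would pass to the enlarged family $\{B_q(\vec{\eta})\}_{(q,\vec{\eta})}$, whose measures still sum to infinity under the assumed divergence hypothesis, and apply a Chung--Erd\H{o}s / Paley--Zygmund second-moment argument to conclude $\lambda_k(\limsup B_q(\vec{\eta})) = 1$. The required quasi-independence estimate
\[
\lambda_k\bigl(B_{q_1}(\vec{\eta}^{(1)}) \cap B_{q_2}(\vec{\eta}^{(2)})\bigr) \lesssim \lambda_k\bigl(B_{q_1}(\vec{\eta}^{(1)})\bigr) \lambda_k\bigl(B_{q_2}(\vec{\eta}^{(2)})\bigr)
\]
factors coordinatewise because each $B_q(\vec{\eta})$ is a product set, and each one-dimensional factor is then an overlap of coprime-fraction neighborhoods of exactly the type handled in the proof of the classical Duffin--Schaeffer conjecture in \cite{Koukoulopoulos_Maynard_2020}. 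Thus the GCD-graph quasi-independence estimates developed there should supply the needed bound, on average over $(q_1,q_2)$ in large dyadic windows.

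The main obstacle will be this last step. The Koukoulopoulos--Maynard bounds must be deployed with constants uniform in the dyadic scale $\vec{\eta}$, and when summing the coordinatewise factorization over the $\log(q/(\varphi(q)\psi(q)))^{2(k-1)}$ pairs of admissible scales one cannot afford any avoidable polylogarithmic loss. Adapting the GCD-graph machinery of \cite{Koukoulopoulos_Maynard_2020} so as to produce a genuinely uniform-in-scale quasi-independence statement, and organising the double scale sum so that the dominant contribution comes from pairs which actually pass the quasi-independence test rather than from atypical pairs inflated by anomalous overlaps, is where the bulk of the technical effort should be expected to lie.
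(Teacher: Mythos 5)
Your convergence half is essentially sound: the dyadic upper bound $\lambda_k(A_q)\ll(\varphi(q)/q)^k\psi(q)\log\left(\tfrac{q}{\varphi(q)\psi(q)}\right)^{k-1}$ does hold (the capped regimes contribute geometric sums dominated by the boundary scales), and first Borel--Cantelli then gives measure zero; this is a slightly more direct packaging of what the paper's Proposition \ref{prop_conv_DS} does via Fubini and the Koukoulopoulos--Maynard convergence theorem, with the same AM--GM-over-totative-gaps computation at its core. Note, however, that only the upper bound is straightforward: your claimed two-sided asymptotic $\lambda_k(A_q)\asymp\cdots$, and likewise the claim that the one-dimensional level set has measure $\asymp\min(\varphi(q)\eta/q,1)$ all the way up to $\eta\asymp q/\varphi(q)$, require a lower bound on how often gaps between consecutive totatives are of order $q/\varphi(q)$; this needs a sieve input about the gap distribution and is nowhere justified in your proposal. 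This matters because your divergence argument relies on exactly this lower bound to make the enlarged family $\{B_q(\vec\eta)\}$ carry divergent total mass. The paper avoids the issue entirely: the logarithm is harvested not from long ``arms'' of length $q/\varphi(q)$ but from the hyperbola within distance $g_k(q)^{1/2}\le 1/2$ of each coprime fraction, via the truncated bump $f_q^{(k)}$ with $g_k(q)=(\psi(q)\varphi(q)/q)^{1/(k-1)}$, where disjointness over $a$ is automatic and \eqref{divergent_expected} follows with no information on totative gaps. Moreover, the paper keeps one coordinate aside and feeds the random function $\psi_\alpha$ into the full KM divergence theorem, so the second-moment work only has to be done for the $(k-1)$-fold product of bumps, not for the $k$-dimensional limsup sets themselves.

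The decisive gaps are in the second-moment step. First, a Chung--Erd\H{o}s/Paley--Zygmund argument with an unspecified constant in the quasi-independence bound only yields $\lambda_k(\limsup B_q(\vec\eta))>0$, not $1$; you would need either a zero--one law for the coprime multiplicative setup (which you do not invoke) or, as the paper does, the asymptotically sharp bound of Proposition \ref{prop_var1} with constant $1+\delta$, which via Chebyshev gives almost-everywhere divergence of $S_\alpha$ directly. Second, the assertion that the overlap estimate ``factors coordinatewise'' and that the KM machinery ``should supply the needed bound'' hides the actual content of the theorem: the one-dimensional overlap bound carries the error factor $\left(1+O(\log(D+2)^{-1})\right)\prod_{p\mid qr/(q,r)^2,\,p>A}\left(1+\tfrac{1}{p-1}\right)$, which is unbounded for individual pairs $(q,r)$ and is only tamed after averaging over all $q,r\le Q$; taking a product over $k-1$ coordinates raises this factor to the $(k-1)$-st power, and one must show that the average of this power is still $1+o(1)$, uniformly over the continuum of scales produced by the layer-cake approximation. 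The paper does this by approximating the bumps by averages of indicators $A_{q,i}$, invoking the sharp overlap lemma of \cite{ABH2023}, uniformizing the scale-dependent quantities $D$ and $A$ through Proposition \ref{D_A_equiv} (replacing them by the scale-free $\chi(q)$), and then re-running the $\mathcal{E}_1$--$\mathcal{E}_5$ decomposition of \cite[Theorem 2]{ABH2023}; none of this is supplied or replaced by an alternative mechanism in your proposal, and you yourself flag it as the open obstacle. Finally, the paper must first dispose of the mass of $\psi$ supported on $\{q:\varphi(q)/q>\rho\}$ via \cite[Theorem 2]{Beresnevich_Haynes_Velani2013} before the truncation $g_k(q)\le 1/2$ even makes sense; some such preliminary reduction would also be needed in your scheme. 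So the proposal correctly predicts the shape of the criterion and the convergence direction, but the divergence direction as written is a plan with its central estimate missing rather than a proof.
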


Clearly, Theorem \ref{main_thm} is in the vein of Conjecture \ref{conj_coprime}, since this is a Khintchine-type $0$-$1$-result that only takes into account the divergence property of a series solely depending on $\psi$. However, the condition is altered by a factor of $q/\varphi(q)$ in the log-powers and it is easy to construct functions $\psi$ such that this factor determines the convergence of the sum, thus strictly speaking, we disprove Conjecture \ref{conj_coprime}. At first glance, it may appear surprising that the correct criterion is not immediately evident. However, unlike the simultaneous case, the divergence property cannot be simply inferred from a sum of measures argument. This is because the underlying hyperbolic regions exhibit a complex (and partially overlapping) structure, in contrast to the hyper-cubes in the simultaneous counterpart. We will explain and illustrate how to come up with the correct quantities in Section \ref{conv_case}.\\

Prior to this article, asking for $\lVert q \alpha\rVert \in A_q$ for i.m. $q$ and some measurable sets $(A_q)_{q \in \N} \subseteq [0,1/2]^k$ in the metric setup required (to the best of the authors' knowledge) at least one of the following assumptions (see, e.g., \cite{Beresnevich_Haynes_Velani2013,Harman_1998} for a more comprehensive discussion):

\begin{itemize}
 \item The sequence of measures $(\lambda_k(A_q))_{q \in \N}$ is monotonically decreasing, or
    \item The shape of every $A_q$ is convex.
\end{itemize}

We provide a non-exhaustive list of examples illustrating the application of monotonicity in related contexts: The actual statement of Gallagher \cite{Gallagher_1962} allows general sets $(A_q)_{q \in \N}$ with the property
$(y_1,\ldots,y_k) \in A_q \Rightarrow (y_1',\ldots,y_k') \in A_q$ for all 
$0 \leq |y_i'| \leq y_i\;\forall 1 \leq i \leq k$, as long as $(\lambda_k(A_q))_{q \in \N}$ is monotonically decreasing. This includes both the cases considered in $S_k(\psi)$ and $S_k^{\times}(\psi)$. Under the same assumptions on $A_q$, Wang and Yu \cite{Wang_Yu1981} even established an asymptotic Schmidt-type formula for the number of solutions to $\{q \leq Q: \lVert q\alpha \rVert \in A_q\}$ for almost every $\alpha$ when $Q \to \infty$. The above-mentioned property was further relaxed in the works of Dodsen and Kristensen \cite{Dodson_Kristensen_2006,Kristensen_2004} where star-shaped domains with respect to $0$ were considered. Nevertheless, monotonicity remained a crucial assumption in the proofs of all mentioned results.

In the regime of convex shapes, the most famous example is the already mentioned work of Pollington and Vaughan \cite{PollingtonVaughan1990} where the corresponding $A_q$'s are $k$-dimensional hypercubes. The shape of $A_q$ can be altered, as long as the convexity property is preserved.
The usage of convex bodies helps to estimate the overlap since counting rational points in convex domains is much easier than in arbitrary shapes. For a detailed discussion, we refer the reader to the monograph of Harman \cite[Chapter 3]{Harman_1998}.\\

While the assumption of monotonicity is not required in Theorem \ref{main_thm}, the sets under examination are of hyperbolic shape, making them highly non-convex. This non-convexity significantly complicates the analysis of overlaps. We regard the capability to address hyperbolic shapes, which arise naturally within the context of multiplicative Diophantine approximation, as the primary technical novelty of Theorem \ref{main_thm}.\\

The new idea employed in this article is to establish not a classical overlap estimate, but to bound the correlation of functions $\int_{0}^1 \gamma_q(x)\gamma_r(x) \,\mathrm{d}x$ in terms of
$\left(\int_{0}^1 \gamma_q(x) \,\mathrm{d}x\right)\left(\int_{0}^1\gamma_r(x) \,\mathrm{d}x\right)$,
where $\gamma_q$ consists of small bumps in the shape of $1/x$ around rationals $a/q$ with $(a,q) = 1$, which is different from the simultaneous setting where the bumps are just indicators (or in the convex generalization, can be approximated well enough with indicators). After establishing a nuanced upper bound for $\gamma_q$, we approximate the corresponding bumps using a sequence of step-functions carefully chosen in the spirit of the Lebesgue integral construction. The estimation of controlling the correlation among these step-functions can be roughly traced back to the case of indicator functions, aligning with the framework of the Duffin-Schaeffer conjecture. By applying methods established in \cite{ABH2023, Koukoulopoulos_Maynard_2020} in a refined manner, we obtain the final ingredient.

\subsection{Applications and related work}

In \cite{Beresnevich_Haynes_Velani2013}, both a $0-1$-law  and a ``Duffin-Schaeffer-Theorem''-type result were proved. It was shown that assuming the sizes of $\psi(q)$ and $\frac{q}{\varphi(q)}$ do not have a too large (positive) correlation, i.e. if

\begin{equation}\label{DST_cond}
\limsup_{Q \to \infty} \sum_{q = 1}^{Q} \left(\frac{\varphi(q)}{q}\right)^k\psi(q)\left(\log (1/\psi(q))\right)^{k-1} 
\left(\sum_{q = 1}^{Q} \psi(q)\left(\log (1/\psi(q))\right)^{k-1}\right)^{-1} > 0,
\end{equation}
then Conjectures  \ref{conj_normal} and \ref{conj_coprime} are true. 
We note that in this regime, the conjectured divergence criteria from both Conjectures coincide with the one established in Theorem \ref{main_thm}, thus there is no contradiction in all statements being true. The following result is immediately implied by Theorem \ref{main_thm}, which makes progress towards Conjecture \ref{conj_normal} by demanding some extra-divergence.

\begin{cor}\label{weak_thm}
    Let $\psi: \N \to [0,1/2]$ be an arbitrary function
    such that \begin{equation}\label{extra_div}\sum\limits_{q \in \N} \left(\frac{\varphi(q)}{q}\right)^k \psi(q) \log\left(\frac{q}{\varphi(q)\psi(q)}\right)^{k-1} = \infty.\end{equation} Then
\[\lambda_k\left(S_k^{\times}(\psi)\right) = 1.\]
In particular, \eqref{extra_div} holds whenever
 \[\sum_{q \in \N} \frac{\psi(q)}{(\log \log q)^k }\log\left(\frac{\log\log q}{\psi(q)}\right)^{k-1}= \infty.\]
\end{cor}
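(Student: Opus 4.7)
The first assertion is immediate from Theorem \ref{main_thm}: since $\lVert q\alpha_i\rVert \leq \lVert q\alpha_i\rVert'$ for every $q$, $i$ and $\alpha_i$, we have the trivial inclusion $D_k^{\times}(\psi) \subseteq S_k^{\times}(\psi)$, and hypothesis \eqref{extra_div} is exactly the divergence criterion in Theorem \ref{main_thm}, yielding $\lambda_k(D_k^{\times}(\psi)) = 1$ and hence $\lambda_k(S_k^{\times}(\psi)) = 1$.

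For the ``in particular'' statement, the plan is to establish the pointwise comparison
\[
\left(\frac{\varphi(q)}{q}\right)^{k}\psi(q)\log\!\left(\frac{q}{\varphi(q)\psi(q)}\right)^{k-1} \gtrsim_{k} \frac{\psi(q)}{(\log\log q)^{k}}\log\!\left(\frac{\log\log q}{\psi(q)}\right)^{k-1}
\]
for all sufficiently large $q$, which then transfers divergence after absorbing the finitely many small terms into an $O(1)$ error. Writing $L := \log\log q$, $M := \log(L/\psi(q))$ and $t := L\cdot \varphi(q)/q$, Mertens' theorem in the sharp Rosser--Schoenfeld form gives a uniform lower bound $t \geq c > 0$, while trivially $t \leq L$. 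In these variables the desired inequality reduces to $t^{k}(M - \log t)^{k-1} \gtrsim_{k} M^{k-1}$.

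The remaining step, which I regard as the only genuine technicality, is a short case distinction on the size of $t$. If $t \leq (L/\psi(q))^{1/2}$, then $\log t \leq M/2$, so $M - \log t \geq M/2$, and combined with $t^{k}\geq c^{k}$ the claim follows. Otherwise $t > (L/\psi(q))^{1/2}$; paired with $t \leq L$ this forces $\psi(q) \geq 1/L$, hence $M \leq 2\log L$ and $t > \sqrt{L}$, while $M - \log t \geq \log(1/\psi(q)) \geq \log 2$. Consequently $t^{k}(M - \log t)^{k-1} \geq L^{k/2}(\log 2)^{k-1}$, and for $q$ large the polynomial growth of $L^{k/2}$ comfortably dominates the polylogarithmic $(2\log L)^{k-1}$, closing the bound. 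The subtlety being handled is that $\varphi(q)/q$ may be much larger than its uniform floor $1/\log\log q$, but in that regime the argument of the outer logarithm shrinks correspondingly; the case split balances these competing effects.
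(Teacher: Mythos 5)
Your argument is correct and is essentially the route the paper intends: the first claim is exactly the inclusion $D_k^{\times}(\psi)\subseteq S_k^{\times}(\psi)$ (since $\lVert q\alpha_i\rVert\leq\lVert q\alpha_i\rVert'$) combined with the divergence part of Theorem \ref{main_thm}, and the ``in particular'' part, which the paper leaves unproved as immediate, rests on the standard bound $\varphi(q)/q\gg 1/\log\log q$. Your reduction to $t^{k}(M-\log t)^{k-1}\gg_k M^{k-1}$ with the two-case split on $t\lessgtr (L/\psi(q))^{1/2}$ is a valid verification of the pointwise comparison (for all large $q$, the trivial cases $\psi(q)=0$ being covered by the paper's convention), so divergence transfers as claimed.
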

For many applications, it is particularly interesting when $\psi$ is of the form $\psi(q)= \mathbbm{1}_{A}(q) \theta(q)$, where $A \subseteq \N$ and $\theta$ is a monotonically decreasing function. For various results in this context, we refer the interested reader again to Chapter 3 of the excellent book of Harman \cite{Harman_1998}.
Given an asymptotic density of $A$, i.e. a function $f:\N \to [0,1]$ such that
\[\frac{\#\{n \leq N: n \in A\}}{N} \sim f(N),\]
we ask whether the expected quality of approximation with $q \in A$ only loses a factor of $f(N)$ in comparison to Khinchine's Theorem. If we have some control of the average size of $\frac{q}{\varphi(q)}$ for $q \in A$, i.e.
\begin{equation}\label{well_behaved_A}
\limsup_{N \to \infty}\frac{\#\{q \leq N: q \in A, \frac{q}{\varphi(q)} \leq M\}}{\#\{q \leq N: q \in A\}} > 0,\end{equation}
then the result in \cite{Beresnevich_Haynes_Velani2013} is enough to obtain the precise information: To name just some easy examples, if $(p_n)_{n \in \N}$ is the sequence of all primes, then $f(N) = 1/\log N$ and $M = 2$ satisfy the above assumptions. 
We thus obtain the very simple criterion of whether $\sum_{q \in \N} \theta(q) = \infty$ (which coincides with the classical Khintchine Theorem).
Hence, we have 
$\lVert p\alpha \rVert\lVert p\beta \rVert \leq \frac{1}{p\log p}$ infinitely often, but 
$\lVert p\alpha \rVert\lVert p\beta \rVert \leq \frac{1}{p(\log p)^{1 + \varepsilon}}$ only finitely often.
If $A$ is a set of positive density, i.e. $f(N) = \delta > 0$, we can approximate with the same quality as in Gallagher's Theorem, that is, $\lVert q\alpha \rVert \lVert q\beta\rVert \leq \frac{1}{q (\log q)^2}$. If $A = \{b^n, n \in \N\}$ for some $b \geq 2$, the above shows that
\[\lVert b^n\alpha \rVert \lVert b^n\beta\rVert \leq \frac{1}{n^2(\log n)^2}\]
holds infinitely often, which is sharp up to triple-logarithmic factors.\\

However, such assertions cannot be inferred for arbitrary sequences $(a_n)_{n \in \mathbb{N}}$. Theorem \ref{main_thm} serves as a tool for deriving various statements in cases where only density estimates are available, without relying on number-theoretic conditions such as \eqref{well_behaved_A}. To illustrate this, we present Corollary \ref{littlewood_subsequence} below, which provides a good estimate on the required density of a given sequence $(a_n)_{n \in \N}$ such that Littlewood's conjecture is true along this sequence for almost every $(\alpha,\beta)$. More precisely, let $A = \{a_1 < a_2 < \ldots\}$ be an infinite set of positive integers and let
\[
LC(A) := \left\{(\alpha,\beta) \in [0,1]^2: \liminf_{n \to \infty} a_n \lVert a_n\alpha\rVert\lVert a_n\beta\rVert = 0 \right\},
\]
i.e. the set of irrationals such that Littlewood's conjecture is true along the sequence $(a_n)_{n \in \N}$ (we refer the interested reader to \cite[Section 1.2]{PVZZ2022} for related results).
The following is an immediate application of Corollary \ref{weak_thm}.

\begin{cor}\label{littlewood_subsequence}
For any $A \subseteq \N$, we have

\[\lambda_2(LC(A)) = \begin{cases}
    1, \text{ if } \quad \frac{\#\{n \leq N: n \in A\}}{N} \gg \frac{\log \log N}{(\log N)^2},\\
    0, \text{ if } \quad \frac{\#\{n \leq N: n \in A\}}{N} \ll \frac{1}{(\log N)^2}.
\end{cases}\]
\end{cor}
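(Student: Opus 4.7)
The strategy for the divergence half is to apply Corollary~\ref{weak_thm} to the function $\psi(q)=\mathbbm{1}_A(q)\cdot\theta(q)$ with $\theta(q)=1/(q\,g(q))$, where $g:\N\to(0,\infty)$ satisfies $g(q)\to\infty$ as slowly as needed; a concrete admissible choice is $g(q)=\log\log\log q$. Because $q\psi(q)=1/g(q)\to 0$, every $(\alpha,\beta)\in S_2^{\times}(\psi)$ satisfies $a_n\lVert a_n\alpha\rVert\lVert a_n\beta\rVert\leq 1/g(a_n)\to 0$ along infinitely many $a_n\in A$, so $S_2^{\times}(\psi)\subseteq LC(A)$. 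To obtain the conclusion $\lambda_2(S_2^{\times}(\psi))=1$ from Corollary~\ref{weak_thm}, I verify its simpler sufficient hypothesis
\[\sum_{q\in\N}\frac{\psi(q)}{(\log\log q)^2}\log\!\left(\frac{\log\log q}{\psi(q)}\right)=\infty.\]
For the present $\psi$ the summand is $\asymp \mathbbm{1}_A(q)\log q/(q\,g(q)(\log\log q)^2)$, and Abel summation against the density hypothesis $\#\{q\leq N\colon q\in A\}\gg N\log\log N/(\log N)^2$ bounds the resulting series below by a constant multiple of $\int^{\infty}dt/(t\log t\log\log t\,g(t))$. For $g(q)=\log\log\log q$ this integral diverges by the iterated-logarithm test, completing the divergence half.

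For the convergence half, fix $\epsilon>0$ and set $E_q:=\{(\alpha,\beta)\in[0,1]^2\colon q\lVert q\alpha\rVert\lVert q\beta\rVert\leq\epsilon\}$. The coordinates $\lVert q\alpha\rVert,\lVert q\beta\rVert$ are independent and uniform on $[0,1/2]$, so a direct Fubini computation yields $\lambda_2(E_q)\asymp(\epsilon/q)\log(q/\epsilon)$. Since $LC(A)\subseteq\bigcap_{\epsilon>0}\limsup_{q\in A}E_q$, the first Borel--Cantelli lemma reduces the claim to $\sum_{q\in A}\lambda_2(E_q)<\infty$, which by partial summation is controlled by $\int^{\infty}A(t)\log t/t^2\,dt$, a convergent integral under the density upper bound on $A$.

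The delicate point is the divergence half, where three slowly-varying weights have to be balanced: the density gain $\log\log N/(\log N)^2$, the unavoidable $1/(\log\log q)^2$ loss in the simplified sufficient criterion (coming from bounding $\varphi(q)/q$ below by $1/\log\log q$), and the extra $\log q$ in the logarithmic weight. The choice $g(q)=\log\log\log q$ is essentially the slowest growth that still guarantees $q\psi(q)\to 0$ while leaving the integrand strictly inside the divergent regime of the iterated-logarithm test; this is precisely what forces the extra $\log\log N$ factor in the density threshold for full measure, and explains the asymmetry with the convergence side.
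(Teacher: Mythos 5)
Your divergence half is correct and is essentially the paper's own argument: the paper likewise feeds $\psi=\mathds{1}_A(q)\,\varepsilon/q$ into Corollary~\ref{weak_thm} and reduces, via partial summation against the density hypothesis, to the divergence of $\sum_q 1/(q\log q\log\log q)$. Your only deviation is the extra weight $g(q)=\log\log\log q$, which buys you the inclusion $S_2^{\times}(\psi)\subseteq LC(A)$ (up to the null set of points with a rational coordinate) from a single approximation function, instead of fixing $\varepsilon$ and intersecting over a countable family $\varepsilon=1/m$ as the paper implicitly does; your Abel-summation reduction to $\int dt/(t\log t\,\log\log t\,\log\log\log t)=\infty$ is sound, so this half stands (modulo adjusting $\psi$ at finitely many small $q$ so that $\psi\leq 1/2$).

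In the convergence half there is a genuine gap: the claim that $\int^{\infty}A(t)\log t\,t^{-2}\,dt$ converges under $A(t)\ll t/(\log t)^2$ is false. If $A$ contains about $2^m/m^2$ integers from each dyadic block $[2^m,2^{m+1})$, then $A(N)\ll N/(\log N)^2$, yet $\sum_{q\in A}\log q/q\gg\sum_m 1/m=\infty$; in general the density bound only yields $\sum_{q\leq N,\,q\in A}\log q/q\ll\log\log N$, so the first Borel--Cantelli step cannot be closed from the stated hypothesis alone. You are in good company: the paper's own proof of this half is the same one-line appeal to summation by parts and has the identical defect, which really traces back to how the hypothesis $\ll 1/(\log N)^2$ must be read. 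What your (otherwise fine) Borel--Cantelli reduction, equivalently the convergence part of Conjecture~\ref{conj_normal} proved by Beresnevich--Haynes--Velani, actually needs is the series condition $\sum_{q\in A}\log q/q<\infty$, which follows from a slightly stronger density bound such as $A(N)\ll N/\bigl((\log N)^2(\log\log N)^{1+\delta}\bigr)$ but not from $A(N)\ll N/(\log N)^2$ alone. If you restate the convergence half under that series condition (or the strengthened density bound), your argument goes through verbatim; as written, the final convergence assertion is unjustified.
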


To the best of the authors' knowledge, this improves the currently best-known result that stems from a direct application of the Koukoulopoulos-Maynard Theorem, where the condition
\[ \frac{\#\{n \leq N: n \in A\}}{N} \gg \frac{1}{\log N},\]
is (up to triple-logarithms), the best bound known to guarantee $\lambda_2(LC(A)) = 1$.
Thus Theorem \ref{littlewood_subsequence} gives an improvement of a factor of (roughly) $\log N/(\log \log N)
$, with the quality of gain increasing by another factor of $\log N/(\log \log N)$ when we consider more variables, i.e. $\lVert n\alpha \rVert \lVert n\beta\rVert \lVert n\gamma \rVert$. One way to get rid of the double logarithms and therefore obtain a sharp statement would be to prove (at least some form of) Conjecture \ref{conj_normal}, a question that remains open.

\subsection{Hausdorff dimension analogue}

Many statements in metric Diophantine approximation have a Hausdorff measure equivalent: By assuming a fast decay of the approximation function $\psi$, the sets $S_k(\psi),S_k^{\times}(\psi),D_k(\psi),D_k^{\times}(\psi)$ are all of Lebesgue measure $0$; however, by a phenomenon now widely known as ``mass transference principle'', which was popularized in the seminal work of Beresnevich-Velani \cite{beresnevich_velani_2006}, one can immediately deduce a Hausdorff dimension version of Lebesgue measure theorems in many different setups. The Hausdorff version for $S_k(\psi)$ with $\psi$ monotone is the classical Jarník-Besicovitch-Theorem, and Beresnevich-Velani \cite{beresnevich_velani_2006} established\footnote{Strictly speaking, the result of  Beresnevich-Velani proved that the Duffin-Schaeffer Conjecture would imply a Hausdorff version of itself, thus the actual proof of the Hausdorff version was implied by the later work of Koukoulopoulos and Maynard in \cite{Koukoulopoulos_Maynard_2020}.} the analogue for $D_k(\psi)$, again without assuming monotonicity.
In the setting of multiplicative Diophantine approximation, the picture is not yet that clear, although progress was made in \cite{Beresnevich_Haynes_Velani2013,Bovey_Dodson_1978,Dodson_1991,Mumtaz_Simmons2018} for $S_k^{\times}(\psi)$ and assuming monotonicity, at least in the divergent case. The best result known so far is due to Hussain and Simmons \cite{Mumtaz_Simmons2018}:
For monotonic $\psi$ with $\psi(q) \to 0$, they showed that
$
\dim_H(S_k^{\times}(\psi)) =  (k-1) + \min\{d(\psi),1\}
$
where
\begin{equation}
\label{def_d_psi}
d(\psi) =  \inf\left\{s \in [0,1]: \sum_{q \in \N} q\left(\frac{\psi(q)}{q}\right)^{s} < \infty\right\}.
\end{equation}

As a Corollary of Theorem \ref{main_thm}, it turns out that the Hausdorff dimension statement remains unchanged for $S_k^{\times}(\psi)), D_k^{\times}(\psi))$ even upon removing the monotonicity assumption:

\begin{cor}\label{Hausdorff_thm}
    Let $\psi: \N \to [0,1/2]$ be an arbitrary function and let $d(\psi)$ be as in \eqref{def_d_psi}. Then we have
    \[\dim_H(S_k^{\times}(\psi)) =  \dim_H(D_k^{\times}(\psi))  = (k-1) + \min\{d(\psi),1\}.\]
\end{cor}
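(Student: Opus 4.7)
The plan is to prove the upper bound for $S_k^\times(\psi)$ and the lower bound for $D_k^\times(\psi)$ separately, and combine them via the inclusion $D_k^\times(\psi) \subseteq S_k^\times(\psi)$ to conclude equality at $s_0 := (k-1) + \min\{d(\psi), 1\}$.

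For the upper bound, I fix $s > s_0$ and write $s = (k-1) + t$ with $d(\psi) < t \leq 1$, so that $\sum_q q(\psi(q)/q)^t < \infty$ by definition of $d(\psi)$. For each $q$ and each rational $(p_1/q, \ldots, p_k/q)$, I decompose the hyperbolic cross $\{\alpha \in [0,1]^k : \prod_i |\alpha_i - p_i/q| \leq \psi(q)/q^k\}$ into dyadic cells indexed by $(j_1, \ldots, j_k)\in \N^k$ with $|\alpha_i - p_i/q| \in [2^{-j_i-1}/q, 2^{-j_i}/q]$ and $\sum_i j_i \geq \log_2(1/\psi(q))$, and cover each cell by axis-aligned cubes of side proportional to $2^{-\max_i j_i}/q$. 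A standard dyadic summation yields an $s$-Hausdorff sum bounded by a constant multiple of $\sum_q q^{1-t}\psi(q)^t \log(1/\psi(q))^{k-1}$, which is finite once $t > d(\psi)$ is chosen slightly larger so that the logarithmic overhead is absorbed. Hence $\mathcal{H}^s(S_k^\times(\psi)) = 0$ for every $s > s_0$.

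For the lower bound, I exploit the product inclusion
\[
D_k^\times(\psi) \;\supseteq\; D_1\bigl(2^{k-1}\psi\bigr) \times [0,1]^{k-1},
\]
which is immediate from the estimate $\prod_{i \geq 2} \lVert q\alpha_i\rVert' \leq 2^{-(k-1)}$. The Hausdorff analogue of the one-dimensional Duffin-Schaeffer theorem, obtained by combining Koukoulopoulos-Maynard with the Beresnevich-Velani mass transference principle, asserts $\dim_H D_1(\tilde\psi) = \min\{d(\tilde\psi), 1\}$ for arbitrary $\tilde\psi:\N \to [0,1/2]$. Since $d(2^{k-1}\psi) = d(\psi)$ and $\dim_H(A \times [0,1]^{k-1}) = \dim_H A + (k-1)$ for any Borel set $A \subseteq [0,1]$, I deduce $\dim_H D_k^\times(\psi) \geq (k-1) + \min\{d(\psi),1\} = s_0$.

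The main technical obstacle lies in the upper bound: the dyadic covering must be carried out with sufficient care to absorb the $\log^{k-1}$ overhead stemming from the non-convex hyperbolic shape, and one has to take $t$ strictly larger than $d(\psi)$ by a vanishing amount. The lower bound becomes surprisingly clean once the product inclusion is in hand, and in this route the role of Theorem \ref{main_thm} is principally conceptual: it is the full $k$-dimensional, non-monotonic counterpart of the Koukoulopoulos-Maynard theorem, confirming that $d(\psi)$ remains the correct critical exponent in the multiplicative Hausdorff theory. An alternative that uses Theorem \ref{main_thm} more directly would be to apply it to an auxiliary function at the critical Lebesgue scale and then invoke a mass transference principle for hyperbolic regions along the lines of Hussain-Simmons, but the product argument above bypasses the need for such a delicate MTP.
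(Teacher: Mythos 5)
Your upper bound route is fine in spirit: the paper itself simply quotes Hussain--Simmons for $\dim_H(S_k^{\times}(\psi)) \leq (k-1)+d(\psi)$ without monotonicity, so your dyadic covering is an optional re-derivation, and absorbing the $\log(1/\psi(q))^{k-1}$ overhead by taking $t$ strictly above $d(\psi)$ does work since $\psi(q)^{t-t'}\log(1/\psi(q))^{k-1}$ is bounded. The genuine gap is in your lower bound: the estimate $\prod_{i\geq 2}\lVert q\alpha_i\rVert' \leq 2^{-(k-1)}$ is false, because the coprime quantity $\lVert \cdot \rVert'$ is \emph{not} bounded by $1/2$. Indeed $\lVert qx\rVert' = \min_{(p,q)=1}\lvert qx-p\rvert$ can be as large as half the maximal gap between consecutive integers coprime to $q$ (Jacobsthal's function): for $q=6$ and $qx$ near $3$ one has $\lVert qx\rVert'=2$, and along primorials this maximum tends to infinity with $q$. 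Hence the inclusion $D_k^{\times}(\psi) \supseteq D_1\bigl(2^{k-1}\psi\bigr)\times[0,1]^{k-1}$ is unjustified and cannot be asserted pointwise: for a given $\alpha$ with $\alpha_1 \in D_1(2^{k-1}\psi)$, the infinitely many good denominators $q$ for $\alpha_1$ may be exactly those for which some $\alpha_i$, $i\geq 2$, sits in a long coprime gap modulo $q$, so that $\prod_{i\geq 2}\lVert q\alpha_i\rVert' > 2^{-(k-1)}$ and the multiplicative inequality fails for those $q$.

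The repair is precisely the extra step in the paper's proof: first establish the uniform bound $\lVert q\alpha\rVert' \leq c_{\varepsilon} q^{\varepsilon}$ for all $\alpha$ and $q$ (a M\"obius/divisor count of integers coprime to $q$ in short intervals, i.e.\ a Jacobsthal-type estimate), and then use the inclusion $D_k^{\times}(\psi) \supseteq [0,1]^{k-1}\times D_1^{\times}(\psi/f_{\varepsilon})$ with $f_{\varepsilon}(q) = c_{\varepsilon}^{k-1}q^{(k-1)\varepsilon}$. Dividing $\psi$ by $q^{O(\varepsilon)}$ lowers the critical exponent by at most $\varepsilon$, so combining the Beresnevich--Velani/Koukoulopoulos--Maynard formula $\dim_H D_1^{\times}(\tilde\psi) = \min\{d(\tilde\psi),1\}$ with the product lower bound for Hausdorff dimension and letting $\varepsilon \to 0$ still yields $\dim_H D_k^{\times}(\psi) \geq (k-1)+\min\{d(\psi),1\}$. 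Apart from this missing $q^{\varepsilon}$ step your architecture (upper bound for $S_k^{\times}$, lower bound for $D_k^{\times}$ via a one-dimensional fibre, and $D_k^{\times}\subseteq S_k^{\times}$) coincides with the paper's, and your remark that Theorem \ref{main_thm} is not directly invoked here is likewise consistent with the paper's argument.
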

We note that Corollary \ref{Hausdorff_thm} is not only making a statement on the coprime approximations as considered in Conjecture \ref{conj_coprime}, but also about the non-coprime setting from Conjecture \ref{conj_normal}. Thus, as an immediate Corollary, we also get a Hausdorff version of Conjecture \ref{conj_normal}:

\begin{cor}
\label{Hausdorff_cor}
 Let $\psi: \N \to [0,1/2]$ be an arbitrary function and assume that \[  \sum\limits_{q \in \N} \psi(q) \log(1/\psi(q))^{k-1} = \infty.\]
 Then $\dim_H(S_k^{\times}(\psi)) = k$.
\end{cor}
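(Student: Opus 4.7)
The plan is to reduce the corollary directly to Corollary \ref{Hausdorff_thm}. That corollary gives the identity $\dim_H(S_k^{\times}(\psi)) = (k-1) + \min\{d(\psi),1\}$ for any $\psi : \N \to [0,1/2]$, and since $S_k^{\times}(\psi) \subseteq [0,1]^k$ automatically yields $\dim_H(S_k^{\times}(\psi)) \leq k$, it suffices to establish that $\min\{d(\psi),1\} = 1$, i.e., $d(\psi) \geq 1$. Unpacking the definition \eqref{def_d_psi}, this is precisely the statement that
\[
\sum_{q \in \N} q^{1-s}\psi(q)^s = \infty \quad \text{for every } s \in [0,1).
\]

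To establish this, I would fix $s \in [0,1)$ and split $\N$ according to the size of $\psi(q)$. Choose a threshold $\varepsilon = \varepsilon(s,k) \in (0,1/2]$ small enough that
\[
x^{s-1} \geq \log(1/x)^{k-1} \quad \text{for all } x \in (0,\varepsilon],
\]
which is possible since $1-s>0$ and any negative power of $x$ eventually dominates any power of $\log(1/x)$ as $x \to 0^+$. Partition $\N = Q_1 \sqcup Q_2$, where $Q_1 := \{q : \psi(q) \geq \varepsilon\}$ and $Q_2 := \{q : \psi(q) < \varepsilon\}$. Since the full series over $\N$ diverges by hypothesis, at least one of the two restricted sums does.

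If the divergent contribution comes from $Q_1$, then $\log(1/\psi(q))^{k-1}$ is bounded on $Q_1$ by $\log(1/\varepsilon)^{k-1}$, so $\sum_{q \in Q_1}\psi(q) = \infty$; together with $\psi(q) \leq 1/2$ this forces $\lvert Q_1 \rvert = \infty$. On this infinite set $q^{1-s}\psi(q)^s \geq \varepsilon^s$, hence the target sum diverges over $Q_1$. If instead the divergent contribution comes from $Q_2$, then by the choice of $\varepsilon$ we have $\psi(q)^{s-1} \geq \log(1/\psi(q))^{k-1}$ throughout $Q_2$; multiplying by $\psi(q)$ and using $q^{1-s} \geq 1$ yields the termwise bound
\[
q^{1-s}\psi(q)^s \;\geq\; \psi(q)\log(1/\psi(q))^{k-1},
\]
and summing over $Q_2$ the divergence transfers. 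Either way, $d(\psi) \geq 1$.

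Given that Corollary \ref{Hausdorff_thm} is already in hand, I do not foresee a serious obstacle here; the whole content of the argument is the elementary comparison between a logarithmically weighted and a polynomially weighted divergence, and the only delicate point is the need to separately handle the regime where $\psi$ is bounded away from $0$ (logarithmic weights are inessential, and divergence of $\sum \psi(q)$ forces an infinite index set thanks to $\psi \leq 1/2$) and the regime where $\psi$ is small (where the polynomial-versus-logarithm comparison kicks in termwise).
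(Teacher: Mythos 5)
Your proof is correct. The paper's own justification of Corollary \ref{Hausdorff_cor} is a one-line remark — it is ``deduced from $q/\varphi(q) \ll_{\varepsilon} q^{\varepsilon}$ and an analogous argumentation'', i.e.\ by rerunning the fibering/containment argument used for Corollary \ref{Hausdorff_thm} and absorbing the $\varphi$-factors into $q^{\varepsilon}$-losses, which are invisible at the level of Hausdorff dimension. You instead take the statement of Corollary \ref{Hausdorff_thm} (which already covers $S_k^{\times}(\psi)$, not just $D_k^{\times}(\psi)$) as a black box and check that the divergence hypothesis forces $\min\{d(\psi),1\}=1$, via the termwise comparison between $q^{1-s}\psi(q)^{s}$ and $\psi(q)\log(1/\psi(q))^{k-1}$, split according to whether $\psi(q)$ is bounded below or small. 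This is a legitimate and arguably cleaner route: it requires no arithmetic input at all (no bound on $q/\varphi(q)$) and makes explicit the elementary implication that the paper leaves implicit; note also that your case split could be avoided entirely, since $x\mapsto x^{s-1}/\log(1/x)^{k-1}$ is bounded below by a positive constant $c_{s,k}$ on all of $(0,1/2]$, giving $q^{1-s}\psi(q)^{s}\ge c_{s,k}\,\psi(q)\log(1/\psi(q))^{k-1}$ uniformly, while terms with $\psi(q)=0$ are harmless under the paper's convention $x\log^{k}(1/x)\mid_{x=0}:=0$. Both approaches rest on the same underlying machinery (the dimension formula of Corollary \ref{Hausdorff_thm}, hence Beresnevich--Velani and Theorem \ref{main_thm}), so the difference is one of bookkeeping rather than substance.
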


\subsection{Open questions}\label{open_q}
Besides the obvious question of proving also Conjecture \ref{conj_normal}, there are natural generalizations resp. quantifications of Theorem \ref{main_thm}. 
One way to do so is to allow an inhomogeneous parameter $\gamma \in \mathbb{R}^k$ and ask the same questions about the sets

\[S_k^{\times}(\psi,\gamma) := \left\{\alpha \in [0,1]^k: \prod\limits_{i=1}^k\lVert q\alpha_i  - \gamma_i \rVert \leq \psi(q) \text{ for i.m. }q \in \N\right\}\]
and their coprime counterparts
\[D_k^{\times}(\psi,\gamma):= \left\{\alpha \in [0,1]^k: \prod\limits_{i=1}^k\lVert q\alpha_i -\gamma_i \rVert' \leq \psi(q) \text{ for i.m. }q \in \N\right\}.
\]

Assuming monotonicity of $\psi$, the inhomogeneous variant of Gallagher's result was proven for $k = 1$ by Sz\"{u}sz \cite{Szusz_1958} and by Chow and Technau \cite{Chow_2018,Chow_Technau_2020} for $k \geq 2$. The authors of the latter obtained actually a stronger result since they only need to have one metric parameter $\alpha_k$ assuming a mild Diophantine condition for $(\alpha_1,\ldots,\alpha_{k-1})$. However, all these proofs made use of the monotonicity of $\psi$, thus the statement remains open in the non-monotonic setup.
Recently a first result for non-monotonic $\psi$ was given by Yu \cite{Yu3}, albeit a strong form of regularity in the form of $\psi(q) = O((q\log q (\log \log q)^2)^{-1})$ is assumed.

By generalizing the Duffin-Schaeffer counterexample, Ramírez \cite{Ramirez_counter} showed that for any $\gamma$, the monotonicity assumption is necessary when $k = 1$, but as in the homogeneous setup, the counterexample does not generalize to higher dimensions. Therefore, the theorem of Chow-Technau \cite{Chow_Technau_2020} or a coprime analogue in the form of Theorem \ref{main_thm} in the inhomogeneous setup might still be true without monotonicity assumption; however, the proof given in this article relies on the proven (homogeneous) Duffin-Schaeffer conjecture and the inhomogeneous counterpart is still a widely open question, for recent progress see \cite{Allen_Ramirez,Hauke,Yu1,Yu2}. In this context, the current methods do not offer a viable approach to addressing these questions until a deeper understanding of the one-dimensional inhomogeneous theory is attained. We also remark that Theorem \ref{main_thm} does not yield any information about the set $D_k^{\times}(\psi)$ other than having Lebesque measure $1$ or $0$. In particular, it does not provide a Diophantine condition that ensures $\alpha \in \R^k$ to lie in $D_k^{\times}(\psi)$. There is no possibility to provide such a ``fibered'' result as in \cite{Chow_Technau_2020} with the metric tools applied in this article, and the approach in \cite{Chow_Technau_2020} does not carry over to the non-monotonic setup.
For fibered results in the well-studied regime of Littlewood's conjecture, we refer the reader to, e.g., \cite{Chow_Technau_2023,Chow_Zafeiropoulos_2021,Pollington_Velani_2000}.

In the theory of Hausdorff measures, the question about Hausdorff dimensions is coarse enough to absorb all $\log$-factors and $\frac{q}{\varphi(q)}$-factors that appear in the Lebesgue measure statements. However, when we consider arbitrary dimension functions $f$ that are not necessarily of the form $f(r) = r^s$, the picture is different. Here, a difference between $H^f(D_k^{\times}(\psi))$ and $H^f(S_k^{\times}(\psi))$ for specific $f$ seems plausible and the correct thresholds remain open.

Finally, another natural open question is to ask about an asymptotic formula that counts the number of coprime solutions up to a given threshold. This was successfully established by Aistleitner, Borda, and the second-named author in \cite{ABH2023} for the case $k = 1$, but is yet unclear for $k \geq 2$. Achieving this goal necessitates a deeper understanding of the variance estimate introduced in this article. Furthermore, and probably even more challenging, one cannot afford to truncate the sets $A_q$ in a way that loses a constant factor of the corresponding measures, a procedure that is applied at various occasions in the proof of Theorem \ref{main_thm}.

\subsection{Notation}\label{notation}
Throughout the text, we use the standard O- and o- notations,
 as well as the Vinogradov notations $\ll,\gg$. If $f \ll g$ and $g \ll f$, we write $f \asymp g$ or $f = \Theta(g)$.
 For $x \in \mathbb{R}$, $\lVert x\rVert := \min\limits_{n \in \mathbb{Z}} \lvert x - n\rvert$ denotes the distance to the nearest integer. For $a,q \in \Z$, we write $(a,q):= \gcd(a,q)$. For $q \in \N$, we denote $\Z_q:= \{ 0, \ldots, q-1 \}$ and $\Z_q^{*}:= \{ a \in \Z_q: (a,q)=1 \}$. For $k$ being an integer, we denote the $k$-dimensional Lebesgue-measure as $\lambda_k$. Given two positive real numbers $x,y$, we write $x \land y := \min \left\{x,y \right\}$. Furthermore, for any $k \in \N$, we define $x \log^k (1/x) \mid_{x = 0} := 0.$\\

\section{Proof of the main result}

\subsection{The convergent case and a discussion on the correct divergence condition}\label{conv_case}

Before we prove the convergent case (which is easier than the divergent case, but still not as trivial as in most other scenarios), we discuss the particular divergence condition of Theorem \ref{main_thm} and its deviation from the conjectured threshold in \cite{Beresnevich_Haynes_Velani2013}. We will focus on the case $k = 2$ here since in the case $k=1$ the conditions coincide and for $k\geq 3$, an analogous phenomenon to $k =2$ takes place.
Starting with the non-coprime case, given some function $\psi:\N \to [0,1/2]$, let
\[A_q = A_q(\psi) := \left\{(\alpha,\beta) \in [0,1]^2: \lVert q\alpha \rVert \lVert q\beta\rVert \leq \psi(q)\right\}.\] Note that we can write $S_2^{\times}(\psi) = \limsup_{q \to \infty}A_q$.
We observe that $A_q$ consists of a regular pattern of $q^2$ "star-shaped" forms with their origins at the rational points $(a/q,b/q)$ (see Figure \ref{Figure_no_coprime}). It is easy to check that we have a partition into

\begin{equation}\label{part_non_coprime}A_q = 
\bigcup\limits_{(a,b) \in \mathbb{Z}_q}^{\boldsymbol{\cdot}} A_{q,a,b}
\end{equation}
where
\[A_{q,a,b} = \left\{(\alpha,\beta) \in [0,1]^2: \left\lvert \alpha - \frac{a}{q} \right\rvert\left\lvert \beta - \frac{b}{q} \right\rvert \leq \frac{\psi(q)}{q^2}, \left\lvert \alpha - \frac{a}{q} \right\rvert \leq \frac{1}{2q}, \left\lvert \alpha - \frac{b}{q} \right\rvert \leq \frac{1}{2q}\right\}.\]
By elementary calculus, we obtain for any $(a,b) \in \mathbb{Z}_q^{2}$ that 
\[\lambda_2(A_{q,a,b}) \asymp \frac{\psi(q)}{q^2}\log\left(\frac{1}{\psi(q)}\right)\]
with an absolute implied constant. Thus by \eqref{part_non_coprime} we obtain $\lambda_2(A_q) \asymp \psi(q)\log\left(\frac{1}{\psi(q)}\right)$, which coincides with the divergence condition in Conjecture \ref{conj_normal} and in Gallagher's Theorem.
\begin{figure}[H]
\begin{center}
\includegraphics[scale=1, trim=130 530 150 70]{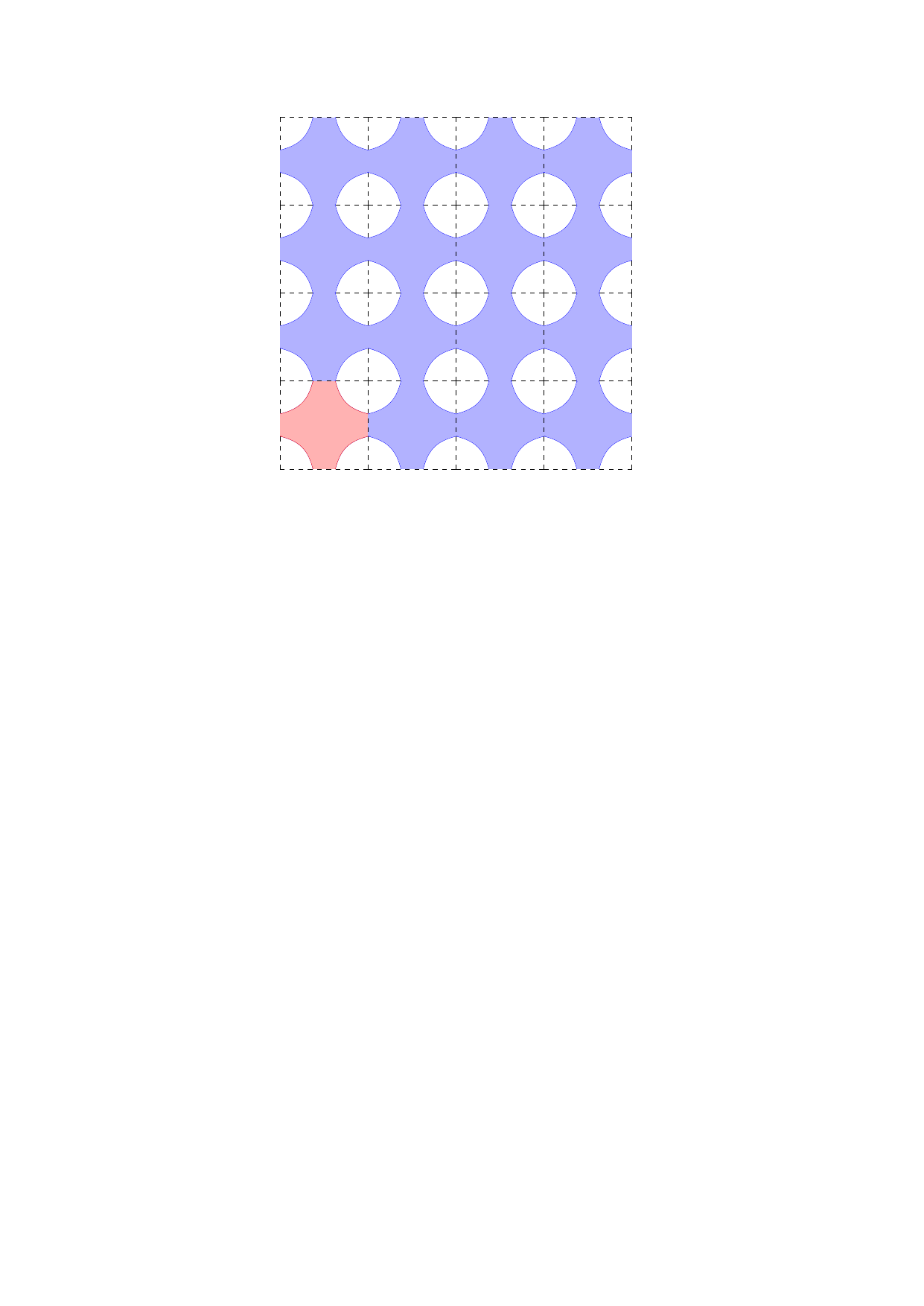}
\caption{An illustration of the set $A_4$ (the non-coprime setting). The red area corresponds to $A_{4,1,1}$. Note that for illustration purposes, the lower-left corner has the coordinate $ ( 1/8, 1/8 )$.}
\label{Figure_no_coprime}
\end{center}
\end{figure}
Changing the attention now to the coprime setting, the question about the Lebesgue measure of
$A_q' = A_q'(\psi) := \left\{(\alpha,\beta) \in [0,1]^2: \lVert q\alpha \rVert' \lVert q\beta\rVert' \leq \psi(q)\right\}$ becomes non-trivial:
One is tempted to apply the same partition argument as in the non-coprime setup in the form of
\begin{equation}
\label{part_coprime}
\bigcup\limits_{(a,b) \in \mathbb{Z}_q^*}^{.} A_{q,a,b} =: A_q'',
\end{equation}
and one would assume $A_q' = A_q''$. This would give us
$\lambda_2(A_q') \asymp \frac{\varphi(q)^2}{q^2}\log\left(\frac{1}{\psi(q)}\right)$ which would coincide with the measures that lead to the condition in Conjecture \ref{conj_coprime}.

However, \eqref{part_coprime} is not true: Assuming the contrary, this would imply that for $(a,b) \in \mathbb{Z}_q$ such that $(a,q) > 1$ or $(b,q) > 1$, the sets
$A_q'$ and $B_{a,b,q} := \left\{(\alpha,\beta) \in [0,1]^2: \left\lvert \alpha - \frac{a}{q} \right\rvert \leq \frac{1}{2q}, \left\lvert \alpha - \frac{b}{q} \right\rvert \leq \frac{1}{2q}\right\}$ are disjoint, but this is wrong: Assuming, e.g., $(a-1,q) = 1, (b,q) = 1$, then a very good approximation of $\beta$ by $b/q$ implies that even for $\alpha$ close to $\frac{a-1}{q}$ (which implies $\left\lvert \alpha - \frac{a}{q} \right\rvert \geq \frac{1}{2q}$), we have $\left\lvert \alpha - \frac{a}{q} \right\rvert\left\lvert \beta - \frac{b}{q} \right\rvert \leq \frac{\psi(q)}{q^2}$. To put this in simple words, the ``arm'' of the ``star'' with its center at $((a-1)/q,b/q)$ ``reaches'' into $B_{a,b,q}$ and if 
$(a,b) \notin {(\mathbb{Z}_q^{*})}^2$, then this part of the arm is in $A_q'$, but not in $A_q''$ (see Figure \ref{Figure_coprime}). Actually, the ``arm'' reaches as far as half of the distance to another coprime element in the same direction. It can be verified by a maximization problem (which is reflected in the equality case of the arithmetic-geometric mean inequality, which appears in the proof of Proposition \ref{prop_conv_DS}) that the extremal configuration happens when the coprime elements are equally spaced among $\mathbb{Z}_q$, which leads to each such ``arm'' to be of length $\asymp \frac{q}{\varphi(q)}$. Computing the mass of one such ``star'' with ``arm lengths'' $\asymp \frac{q}{\varphi(q)}$ yields the measure
$\frac{\psi(q)}{q^2}\log\left(\frac{\frac{q}{\varphi(q)}}{\psi(q)}\right)$ and since there are $\varphi(q)^2$ such coprime ``stars'', we obtain the divergence condition from Theorem \ref{main_thm}. 

Having the heuristic explanation established, we turn to the actual proof of the convergent case of Theorem \ref{main_thm}.
\begin{figure}[H]
\begin{center}
\includegraphics[scale=1, trim=150 540 150 70]{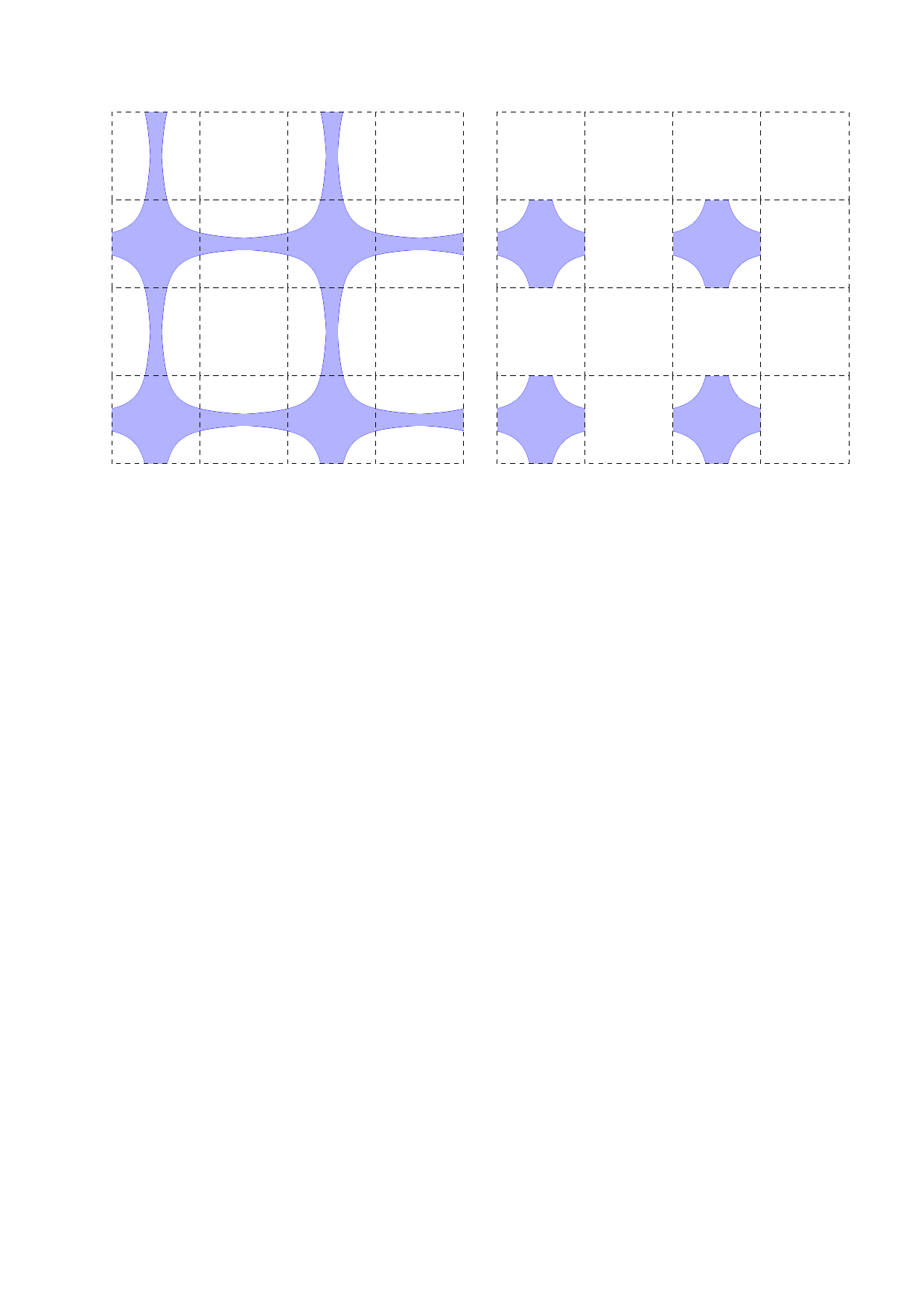}
\caption{To the left, we see $A_4'$, which is the proper set in the coprime setting. To the right, we have $A_4''$, which would give the conjectured divergence criterion in \cite{Beresnevich_Haynes_Velani2013}. We note that in both figures, the lower left corner has the coordinate $(1/8,1/8)$.}
\label{Figure_coprime}
\end{center}
\end{figure}
\begin{proposition}
\label{prop_conv_DS}  
Let $k \geq 1$ and $\psi: \N \rightarrow [0,1/2]$ be a function such that
\begin{equation*}
    \sum_{q \in \N} \left( \frac{\varphi(q)}{q} \right)^{k} \psi(q) \log\left( \frac{q}{\psi(q) \varphi(q)}\right)^{k-1} < \infty.
\end{equation*}
Then $ \lambda_k(D_k^{\times}(\psi)) = 0$.
\end{proposition}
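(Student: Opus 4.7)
The plan is a direct Borel--Cantelli argument on the sets
\[
A_q' := \Bigl\{\alpha \in [0,1]^k : \prod_{i=1}^k \lVert q\alpha_i\rVert' \le \psi(q)\Bigr\},
\]
for which $D_k^{\times}(\psi) = \limsup_q A_q'$. It suffices to prove the per-level bound
\[
\lambda_k(A_q') \ll_k \Bigl(\frac{\varphi(q)}{q}\Bigr)^k \psi(q)\,\log\Bigl(\frac{q}{\varphi(q)\psi(q)}\Bigr)^{k-1},
\]
which combined with the convergence hypothesis closes the argument.

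For each $q$, I would partition $A_q' = \bigsqcup_{\vec a \in (\Z_q^*)^k} R_{\vec a}$, where $R_{\vec a}$ is the subset on which $a_i/q$ is the nearest coprime fraction to $\alpha_i$ in each coordinate. In the local coordinates $y_i = q\alpha_i - a_i$, the region $R_{\vec a}$ sits inside the rectangle $\prod_i [-\ell^-_{a_i}, \ell^+_{a_i}]$, where $\ell^\pm_a$ equals half the gap in $\Z_q^*$ between $a$ and its preceding/succeeding coprime neighbour. Introducing the hyperbolic volume
\[
V_k(L_1,\ldots,L_k;M) := \lambda_k\Bigl\{y \in \textstyle\prod_{i=1}^k [0,L_i] : \prod_{i=1}^k y_i \le M\Bigr\},
\]
and splitting $R_{\vec a}$ into the $2^k$ sign orthants yields $q^k\lambda_k(R_{\vec a}) = \sum_{\sigma \in \{\pm\}^k} V_k(\ell^{\sigma_1}_{a_1},\ldots,\ell^{\sigma_k}_{a_k};\psi(q))$. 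An elementary induction on $k$ gives $V_k(L_1,\ldots,L_k;M) \ll_k M(1+\log^+(L_1\cdots L_k/M))^{k-1}$.

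The decisive step is the observation that $V_k$ is separately concave in each coordinate $L_i$: differentiating in $L_i$ returns the cross-sectional volume $V_{k-1}(\widehat{L_i};M/L_i)$, which is monotonically decreasing in $L_i$ (as the threshold $M/L_i$ shrinks). Since the $\varphi(q)$ gaps between consecutive coprime residues mod $q$ sum to $q$, we have $\sum_{a\in\Z_q^*}\ell^\pm_a = q/2$, and so the mean arm length is $\bar\ell = q/(2\varphi(q))$. Treating $(\ell_{a_1},\ldots,\ell_{a_k})$ as independent random variables under uniform sampling on $(\Z_q^*)^k$, separate concavity combined with independence permits iterated Jensen, giving
\[
\sum_{\vec a\in(\Z_q^*)^k} V_k(\ell_{a_1},\ldots,\ell_{a_k};\psi(q)) \le \varphi(q)^k\,V_k(\bar\ell,\ldots,\bar\ell;\psi(q)) \ll_k \varphi(q)^k\,\psi(q)\,\log\!\Bigl(\frac{q}{\varphi(q)\psi(q)}\Bigr)^{k-1}.
\]
This is the analytic incarnation of the AM--GM extremality foreshadowed in the heuristic: the worst case is exactly when all coprime residues are equally spaced. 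Summing the $2^k$ sign contributions loses a harmless $k$-dependent constant, so dividing by $q^k$ yields the desired upper bound on $\lambda_k(A_q')$, and Borel--Cantelli concludes.

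I expect the main technical subtlety to lie precisely in the separate-concavity step: any crude replacement of the ``density of arms'' by its pointwise maximum $2\varphi(q)/q$ together with integration up to the largest possible arm $\lesssim 1/2$ produces only a spurious $\log(q^k/\psi(q))^{k-1}$ factor without any $\varphi(q)$ saving inside the logarithm. The separate concavity of $V_k$ is exactly what encodes the trade-off that long arms are rare while short arms contribute little, producing the genuine $q/(\varphi(q)\psi(q))$ in the logarithm---the very correction that distinguishes Theorem \ref{main_thm} from the originally stated Conjecture \ref{conj_coprime}.
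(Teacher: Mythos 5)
Your proposal is correct, but it takes a genuinely different route from the paper. You bound $\lambda_k(A_q')$ directly: partition into cells around the coprime grid points, compute each cell as a sum of $2^k$ hyperbolic orthant volumes $V_k$, and use separate concavity of $V_k$ plus iterated Jensen over the independent arm lengths (each averaging $q/(2\varphi(q))$, since the coprime gaps sum to $q$) to show the equally-spaced configuration is extremal; then the easy direction of Borel--Cantelli finishes. This makes rigorous the AM--GM extremality that the paper only sketches in its heuristic discussion in Section \ref{conv_case}, and it yields the explicit per-level bound $\lambda_k(A_q') \ll_k (\varphi(q)/q)^k\,\psi(q)\log\bigl(q/(\varphi(q)\psi(q))\bigr)^{k-1}$, which also certifies that the divergence criterion of Theorem \ref{main_thm} really is a sum-of-measures condition. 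The paper never estimates $\lambda_k(A_q')$ at all: it fixes the first $k-1$ coordinates, truncates $1/\lVert q\alpha_i\rVert'$ at $1/\Psi(q)$ with $\Psi(q)=\psi(q)(\varphi(q)/q)^{k-1}$, shows via Fubini that the expectation of the truncated product over one coordinate is $\ll \tfrac{\varphi(q)}{q}\log\bigl(q/(\varphi(q)\psi(q))\bigr)$ (their AM--GM step, i.e.\ concavity of $\log$ over the coprime gaps, plays the role of your Jensen step), and then applies the convergence part of the one-dimensional Duffin--Schaeffer statement fiberwise in the last coordinate. Both arguments are elementary --- the ``Koukoulopoulos--Maynard convergence part'' is itself just one-dimensional Borel--Cantelli --- so what each buys is mostly structural: yours is more self-contained and geometric and produces the measure of $A_q'$ itself, while the paper's slicing-plus-truncation device ($\frac{1}{\lVert\cdot\rVert'}\wedge\frac{1}{\Psi(q)}$) mirrors the truncated bump functions $f_q^{(k)}$ used in the divergence half, so it doubles as a warm-up for that harder argument. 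If you write your version out, the two points to make explicit are the induction giving $V_k(L_1,\ldots,L_k;M)\ll_k M\bigl(1+\log^{+}(L_1\cdots L_k/M)\bigr)^{k-1}$ and the use of $\psi(q)\le 1/2$ to absorb $\log^{+}\bigl(\bar\ell^{\,k}/\psi(q)\bigr)$ into $k\log\bigl(q/(\varphi(q)\psi(q))\bigr)$; both are routine.
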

\begin{proof}
Defining $\Psi(q):= \psi(q)\left(\frac{\varphi(q)}{q} \right)^{k-1}$, we observe that
\begin{align*}
    \# \left\{ q \in \N : \prod_{i=1}^k || q \alpha_i ||' \leq \psi(q) \right\} & \leq \# \left\{ q \in \N : ||q \alpha_k ||' \leq \frac{\psi(q)}{\prod_{i=1}^{k-1} || q \alpha_i||'}, \forall i \in \{ 1, \ldots, k-1 \} : || q \alpha_i||' \geq \Psi(q) \right\} \\
    & \qquad + \# \left\{ q \in \N : \exists i \in \{1, \ldots, k-1 \} : || q \alpha_i ||' \leq \Psi(q)  \right\} \\
    & \leq \# \left\{ q \in \N : ||q \alpha_k ||' \leq \psi(q) \prod_{i=1}^{k-1} \left[  \frac{1}{|| q \alpha_i||'} \land \frac{1}{\Psi(q)}\right] \right\} \\
    & \qquad + \sum_{i=1}^{k-1} \# \left\{ q \in \N: || q \alpha_i ||' \leq \Psi(q)  \right\} \\
    &=: M_1(q) + M_2(q).
\end{align*}
We observe that $M_2(q)$ is almost surely finite by the convergence part of the Koukoulopoulos-Maynard-Theorem, since by assumption we have 
\[
\sum_{q \in \N} \frac{\varphi(q)}{q} \Psi(q) = \sum_{q \in \N } \psi(q) \left( \frac{\varphi(q)}{q} \right)^{k} < \infty.
\]
Thus, we are left with $M_1(q)$, which is almost surely finite by the convergence part of the Koukoulopoulos-Maynard theorem, provided that
\begin{equation}
\label{Eq_conv_psi}
\sum_{q \in \N} \frac{\varphi(q)}{q} \psi(q)  \psi_{\alpha}(q) < \infty 
\end{equation}
holds for almost all $\alpha \in [0,1]^{k-1}$, where
\[
\psi_{\alpha}(q) := \prod_{i=1}^{k-1} \left[ \frac{1}{|| q \alpha_i ||'} \land \frac{1}{\Psi(q)} \right].
\]
The condition in \eqref{Eq_conv_psi} is certainly true if
\[
\int_{[0,1]^{k-1}} \sum_{q \in \N} \frac{\varphi(q)}{q} \psi(q) \psi_{\alpha}(q) \,\mathrm{d}\alpha = \sum_{q \in \N} \frac{\varphi(q)}{q} \psi(q)  \int_{[0,1]^{k-1}}  \psi_{\alpha}(q) \,\mathrm{d}\alpha < \infty,
\]
where we are allowed to interchange integral and summation by Fubini's theorem for non-negative integrands. We further get
\begin{align*}
\int_{[0,1]^{k-1}}  \psi_{\alpha}(q) \,\mathrm{d}\alpha = \left( \int_0^1 \tilde{\psi}_{\beta}(q) \,\mathrm{d}\beta \right)^{k-1},
\end{align*}
with $\tilde{\psi}_{\beta}(q) := \frac{1}{|| q \beta ||'} \land \frac{1}{\Psi(q)}$ for $\beta \in [0,1]$. In the following, we compute an upper bound for the expected value of $\tilde{\psi}_{\beta}(q)$, where we obtain
\begin{align*}
\int_{0}^{1} \tilde{\psi}_{\beta}(q) \,\mathrm{d}\beta
= \sum_{b=0}^{q-1} \int_{b/q}^{(b+1)/q}  \left( \max_{a \in \Z_q^*}\frac{1}{|q \beta - a|} \land \frac{1}{\Psi(q)} \right) \,\mathrm{d}\beta =:I_q.
\end{align*}
For any $ b \in \Z_q$, we introduce the following sets
\[
J_b := \left( \frac{b}{q} - \frac{1}{2q}, \frac{b}{q} + \frac{1}{2q}\right)  \setminus \left\{ \frac{b}{q}\right\} = \left( \frac{b}{q} - \frac{1}{2q}, \frac{b}{q} \right) \cup \left(\frac{b}{q}, \frac{b}{q} +  \frac{1}{2q} \right) =: J_b' \cup J_b''.
\]
Then, $\arg \min_{c \in \Z_q^*} | q \beta - c| $ is constant on $J_b'$ and $J_b''$ respectively. This allows us to define the following covering of $\Z_q$ by
\[
A_a := \left \{ b \in \Z_q : a = \arg \min_{c \in \Z_q^*} | q \beta - c |, \forall \beta \in J_b' \right\} \cup \left \{ b \in \Z_q : a = \arg \min_{c \in \Z_q^*} | q \beta - c |, \forall \beta \in J_b'' \right\}, \quad a \in \Z_q^*.
\]
Note that any $b \in \Z_q$ can be contained in at most two different sets $A_a$, which leads to
\begin{align*}
    I_q &= \sum_{  b=0 }^{q-1} \int_{b/q - 1/2q}^{b/q + 1/2q} \left( \max_{a \in \Z_q^* } \frac{1}{| q \beta - a|} \land \frac{1}{\Psi(q)}\right) \,\mathrm{d}\beta \\
    & \leq 2 \sum_{ \substack{a=0 \\ (a,q)=1}}^{q-1} \sum_{ b \in A_a } \int_{J_b}  \frac{1}{| q \beta - a|} \land \frac{1}{\Psi(q)} \,\mathrm{d}\beta. 
\end{align*}
Now we observe that $A_a \subseteq [a - f_1(a), a + f_2(a)]$, where the $f_i(a) \in \N$ are such that 
\[ 
\sum_{\substack{a=0 \\ (a,q)=1}}^{q-1}  f_1(a) + f_2(a) \leq 2q. \]
Therefore, we obtain
\begin{align*}
    I_q & \leq \frac{1}{q} \sum_{(a,q)=1} \int_{- f_1(a)}^{f_2(a)} \frac{1}{| \beta| } \land \frac{1}{\Psi(q)} \,\mathrm{d}\beta \\
    & \ll \frac{1}{q} \sum_{(a,q)=1} \left[ \log\left(\frac{f_1(a)}{\Psi(q)}\right) + \log\left(\frac{f_2(a)}{\Psi(q)}\right) \right]\\
    & \ll \frac{\varphi(q)}{q} \log \left(\left( \frac{q}{\varphi(q)} \right)^k \frac{1}{\psi(q)} \right) \\
    & \ll \frac{\varphi(q)}{q} \log \left( \frac{q}{\varphi(q) \psi(q)} \right),
\end{align*}
where the estimate in the second last line uses the arithmetic-geometric mean inequality. This implies
\[
\sum_{q \in \N} \frac{\varphi(q)}{q} \psi(q)  \int_{[0,1]^{k-1}}  \psi_{\alpha}(q) \,\mathrm{d} \alpha \ll 
\sum_{q \in \N} \left( \frac{\varphi(q)}{q} \right)^{k} \psi(q) \log \left( \frac{q}{\varphi(q) \psi(q) } \right)^{k-1} < \infty,
\]
as desired.
\end{proof}

\subsection{The divergent case}
\subsubsection{Proof of Theorem \ref{main_thm} assuming a variance estimate}
\label{SubsubSec_strat}

Let $k \geq 2$ be fixed and recall that, under the assumption of
\begin{equation}
\label{Eq_div_sum_k}
\sum_{q \in \N} \left( \frac{\varphi(q)}{q} \right)^{k} \psi(q) \log \left( \frac{q}{\varphi(q)\psi(q)}\right)^{k-1} = \infty,
\end{equation}
we aim to show that for almost all $\alpha=(\alpha_1, \ldots, \alpha_k) \in \R^k$, there are infinitely many $q \in \N$ such that
\[
|| q \alpha_1 ||' || q \alpha_2 ||' \cdots || q \alpha_k||' \leq \psi(q).
\]
For the quantity in \eqref{Eq_div_sum_k}, for any fixed small $\rho > 0$, we have at least one of the two following:
\begin{align*}
 \sum_{\substack{q \in \N \\ \frac{\varphi(q)}{q} \leq \rho }} \left( \frac{\varphi(q)}{q} \right)^{k} \psi(q) \log \left( \frac{q}{\psi(q)\varphi(q)}  \right)^{k-1} = \infty \quad \text{or} \quad  \sum_{\substack{q \in \N \\ \frac{\varphi(q)}{q} > \rho }} \left( \frac{\varphi(q)}{q} \right)^{k} \psi(q) \log \left( \frac{q}{\psi(q)\varphi(q)} \right)^{k-1} = \infty.  
\end{align*}
Suppose first the latter to be true. Then $\tilde{\psi}(q) := \psi(q)\mathds{1}_{[\frac{\varphi(q)}{q} > \rho ]}$ satisfies \eqref{DST_cond}. Thus by \cite[Theorem 2]{Beresnevich_Haynes_Velani2013}, it follows immediately that $\lambda_k( D_k^{\times}(\psi)) = 1$. Thus, we may restrict ourselves to the case where for every $q$ in the support of $\psi$, it holds that
\begin{equation}
\label{Eq_cond_rho}
\frac{\varphi(q)}{q} \leq \rho
\end{equation}
for some small $\rho > 0$ that is specified below. We will show that almost surely,
\begin{equation}\label{divided_through}
\# \left\{ q \in \N : ||q \alpha_1 ||' \leq \psi_{\alpha}(q) \right\}= \infty,
\end{equation}
where $ \psi_{\alpha}(q) := \frac{\psi(q)}{\prod_{i=2}^{k} || q \alpha_i ||'}$. By the Koukoulopoulos-Maynard-Theorem, \eqref{divided_through} is true whenever
\begin{equation*}
\sum_{q \leq Q } \frac{\varphi(q)}{q}\psi_{\mathbf{\alpha}}(q) = \infty.
\end{equation*}
Defining
\begin{equation}
\label{Eq_def_rho}
\rho = \rho(k):= \frac{1}{4^{k-1}}
\end{equation}
and assuming \eqref{Eq_cond_rho}, we obtain $g_k(q):= \left( \frac{\psi(q) \varphi(q)}{q} \right)^{1/(k-1)} \leq 1/2$ for all $q \in \N$. Using this, a straightforward computation shows that
\begin{equation}\label{eq_geq_gamma}\frac{\varphi(q)}{q}\psi_{\mathbf{\alpha}}(q) \geq \gamma_q^{(k)}(\alpha),\end{equation}
where
\begin{equation*}
\gamma_q^{(k)}(x_1, \ldots, x_{k-1}) := \prod_{i=1}^{k-1}\sum_{(a,q)=1} f_q^{(k)}(q x_i -a),
\end{equation*}
with
\begin{equation*}
f_q^{(k)}(y) = \begin{cases}
    1, & \text{if} \quad |y| \leq g_k(q),\\
    \frac{g_k(q)}{|y|}, & \text{if} \quad |y| \in \left[ g_k(q), g_k(q)^{1/2} \right],\\
    0, & \text{else}.
\end{cases}
\end{equation*}

By elementary calculus and Fubini's Theorem, we obtain

\begin{equation}
    \label{divergent_expected}
\int_{[0,1]^{k-1}} \sum_{q \in \N} \gamma_q^{(k)}(\alpha) \,\mathrm{d} \alpha \gg 
\sum_{q \in \N} \left( \frac{\varphi(q)}{q} \right)^{k} \psi(q) \log \left( \frac{q}{\psi(q)\varphi(q)} \right)^{k-1} = \infty,
\end{equation}
where the last equality is assumption \eqref{Eq_div_sum_k}.
Next, we make use of the following variance estimate for $\gamma_q^{(k)}$, which is the remaining key ingredient in the proof of Theorem \ref{main_thm}:

\begin{proposition}
\label{prop_var1}
Let $\gamma_q^{(k)}$ be defined as above. Then for all $\delta > 0$, there exists a $Q(\delta) \in \N$ such that for all $Q \geq Q(\delta)$ we have
\[
\sum_{q,r \leq Q} \int_{[0,1]^{k-1}} \gamma_q^{(k)}(x) \gamma_r^{(k)}(x) \,\mathrm{d}x
\leq (1 + \delta) \left( \sum_{q \leq Q} \int_{[0,1]^{k-1}} \gamma_q^{(k)}(x) \,\mathrm{d}x \right)^2.
\]
\end{proposition}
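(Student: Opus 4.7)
The plan is to exploit the tensor-product structure of $\gamma_q^{(k)}$ to reduce Proposition~\ref{prop_var1} to a one-dimensional correlation estimate, decompose the resulting bumps by a layer-cake / step-function construction, and then invoke the variance machinery for Duffin-Schaeffer-type indicator functions developed in \cite{Koukoulopoulos_Maynard_2020} and refined in \cite{ABH2023}. Writing $\gamma_q(x) := \sum_{(a,q)=1} f_q^{(k)}(qx-a)$ for the one-dimensional slice, so that $\gamma_q^{(k)}(x_1,\ldots,x_{k-1}) = \prod_{i=1}^{k-1}\gamma_q(x_i)$, Fubini will give $\int_{[0,1]^{k-1}}\gamma_q^{(k)}\gamma_r^{(k)}\,\mathrm{d}\mathbf{x} = c_{q,r}^{k-1}$ and $\int_{[0,1]^{k-1}}\gamma_q^{(k)}\,\mathrm{d}\mathbf{x} = m_q^{k-1}$, with $c_{q,r} := \int_0^1 \gamma_q \gamma_r\,\mathrm{d}x$ and $m_q := \int_0^1 \gamma_q\,\mathrm{d}x$. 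The proposition therefore reduces to showing $\sum_{q,r \leq Q} c_{q,r}^{k-1} \leq (1+\delta) \bigl(\sum_{q \leq Q} m_q^{k-1}\bigr)^2$, and it will suffice to prove a ratio bound of the form $c_{q,r} \leq (1+\eta) m_q m_r$ for ``most'' pairs $(q,r)$ together with quantitative control of the exceptional set, so that $(1+\eta)^{k-1} \leq 1+\delta$ for $\eta = \eta(\delta,k)$ small enough.

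To bring the situation into the indicator-function setting, I would next expand each $\gamma_q$ via the layer-cake representation. The bump $f_q^{(k)}$ takes values in $[0,1]$ and can be written $f_q^{(k)}(y) = \int_0^1 \mathbf{1}_{\{|y| < R_q(t)\}}\,\mathrm{d}t$ with $R_q(t) := \min(g_k(q)^{1/2}, g_k(q)/t)$. Since the coprime centres $a/q$ are spaced by at least $1/q$ and $R_q(t)/q \leq 1/(\sqrt{2} q)$, at most one coprime term contributes to $\gamma_q(x)$ at any $x$ (with the tail overlap near $R_q(t) \approx 1/\sqrt{2}$ absorbed into a benign error), which should yield
\begin{equation*}
\gamma_q(x) = \int_0^1 \mathbf{1}_{A_q(t)}(x)\,\mathrm{d}t, \qquad A_q(t) := \{ x \in [0,1]: \lVert qx \rVert' \leq R_q(t) \},
\end{equation*}
and consequently $c_{q,r} = \int_0^1 \int_0^1 \lambda(A_q(t) \cap A_r(s))\,\mathrm{d}t\,\mathrm{d}s$. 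I would then approximate the outer double integral by a step function at carefully chosen dyadic levels $t_0 > t_1 > \cdots$, reducing the task to controlling a weighted sum of classical Duffin-Schaeffer overlaps $\lambda(A_q(t_j) \cap A_r(t_{j'}))$.

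For each fixed pair of levels the overlap $\lambda(A_q(t_j) \cap A_r(t_{j'}))$ is a correlation between reduced-residue neighbourhoods with (possibly) different thresholds for $q$ and $r$. The idea is to combine the GCD-graph decomposition of \cite{Koukoulopoulos_Maynard_2020} with the sharp variance estimate of \cite{ABH2023} to obtain the near-independence bound $\lambda(A_q(t_j) \cap A_r(t_{j'})) \leq (1+o(1))\lambda(A_q(t_j))\lambda(A_r(t_{j'}))$ for all $(q,r)$ outside a combinatorially defined bad set $\mathcal{B}$; the iterative edge-removal procedure of \cite{Koukoulopoulos_Maynard_2020} should then show that the pairs in $\mathcal{B}$ contribute $o\bigl(\bigl(\sum_{q \leq Q} m_q\bigr)^2\bigr)$ in aggregate. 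Integrating over the levels $(t,s)$ would give $c_{q,r} \leq (1+\eta) m_q m_r + E(q,r)$ with total error $\sum_{q,r \leq Q} E(q,r) = o\bigl(\bigl(\sum_q m_q\bigr)^2\bigr)$, and raising both sides to the $(k-1)$-st power together with the choice $(1+\eta)^{k-1} \leq 1+\delta$ would finish the off-diagonal estimate. The diagonal contribution $\sum_q c_{q,q}^{k-1} \ll \sum_q m_q^{k-1}$ is $o\bigl(\bigl(\sum_q m_q^{k-1}\bigr)^2\bigr)$ by the divergence \eqref{divergent_expected} and is handled separately.

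The hard part will be maintaining uniformity across the continuum of thresholds $R_q(t)$: unlike the classical Duffin-Schaeffer setting where a single $\psi$ is fixed, here the layer-cake decomposition produces a one-parameter family of Duffin-Schaeffer sets, so the GCD-graph anatomy of \cite{Koukoulopoulos_Maynard_2020} and the bad-pair bookkeeping of \cite{ABH2023} must be made to function uniformly in the level parameter, with the dyadic discretisation error absorbed into the $(1+\delta)$ slack. Moreover, since the final $(k-1)$-fold exponentiation amplifies any multiplicative deficit, the ratio bound really must be sharpened to $(1+o(1))$---not merely to some fixed constant $C > 1$---on the near-independent pairs, which is precisely the most delicate aspect of the Aistleitner-Borda-Hauke variance estimate that must be transferred to the present weighted, hyperbolic setting.
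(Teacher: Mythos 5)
Your overall strategy is the same as the paper's (tensor reduction to a one-dimensional correlation, layer-cake decomposition of the hyperbolic bump into unions of intervals around reduced fractions, then the overlap/variance machinery of \cite{Koukoulopoulos_Maynard_2020,ABH2023}), but the step you yourself single out as ``the hard part'' --- uniformity of the overlap estimates across the continuum of thresholds $R_q(t)$ --- is precisely the point where your plan has no mechanism, and it is where the paper introduces its key device. If you layer-cake the raw bump $f_q^{(k)}$, the top layers have radius $\approx g_k(q)$, which is smaller than the mean radius $\asymp g_k(q)\log\left(g_k(q)^{-1}\right)$ by an unbounded factor; for those layers the parameter $D(q,r,i,j)=\max\{r x_{q,i},q x_{r,j}\}/(q,r)$ in the overlap lemma of \cite{ABH2023} can be far below the mean-keyed quantity $D_{\chi}(q,r)$, so both the error term $1+O\left(\log(D+2)^{-1}\right)$ and the prime cutoff $A(D)$ in the product $\prod_{p\mid qr/(q,r)^2,\,p>A}\left(1+\frac{1}{p-1}\right)$ fail to be uniformly comparable to the quantities the $\mathcal{E}_1$--$\mathcal{E}_5$ bookkeeping of \cite[Theorem 2]{ABH2023} is keyed to. The paper resolves this \emph{before} discretising: it replaces $f_q^{(k)}$ by $f_{q,\varepsilon}^{(k)}$, which is set identically $1$ out to radius $g_k(q)\left(1+\varepsilon\log\left(g_k(q)^{-1}\right)\right)$, so that \emph{every} layer radius is, up to $\varepsilon$-dependent constants, at least the mean radius; then all per-layer parameters dominate a single pair-level quantity $\bar{D}(q,r)$ which (via Proposition \ref{D_A_equiv}) is interchangeable with $D_{\chi}(q,r)$, $\chi(q)=\frac{q}{\varphi(q)}\int\gamma_q^{(k)}$, at the cost of only a $(1+\varepsilon)^{2(k-1)}$ loss. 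Without this capping (or an equivalent trick), your claim that the GCD-graph/ABH arguments ``must be made to function uniformly in the level parameter'' is an unproven assertion, not a proof.

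There is also a gap in your final bookkeeping. Proving $c_{q,r}\leq(1+\eta)m_q m_r+E(q,r)$ with $\sum_{q,r\leq Q}E(q,r)=o\left(\left(\sum_{q\leq Q}m_q\right)^2\right)$ and then ``raising both sides to the $(k-1)$-st power'' does not give $\sum_{q,r\leq Q}c_{q,r}^{k-1}\leq(1+\delta)\left(\sum_{q\leq Q}m_q^{k-1}\right)^2$: the exceptional pairs are controlled in the wrong normalisation (the target involves $\sum_q m_q^{k-1}$, whose divergence is what the hypothesis gives, not $\sum_q m_q$), and $(a+b)^{k-1}$ does not split additively, so an aggregate bound on $\sum E$ says nothing about $\sum_{\text{bad}}c_{q,r}^{k-1}$. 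The paper avoids this by never isolating a one-dimensional additive error: it keeps the factor $\left[\left(1+O\left(\log(\bar{D}(q,r)+2)^{-1}\right)\right)P_{\chi}(q,r)\right]^{k-1}$ inside the double sum weighted by $\chi(q)\frac{\varphi(q)}{q}\chi(r)\frac{\varphi(r)}{r}=m_q^{k-1}m_r^{k-1}$ and reruns the decomposition of \cite[Theorem 2]{ABH2023} at that level, where the $(k-1)$-st power only affects constants. Your diagonal estimate and the reduction $c_{q,r},m_q$ via Fubini are fine, but as written the two gaps above are exactly the substance of the proposition.
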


Assuming Proposition \ref{prop_var1} (which is proven down below) to hold, we finish the proof of Theorem \ref{main_thm} in the following way by a standard argument:
Using \eqref{eq_geq_gamma}, it suffices to show that for almost all
$\alpha= (\alpha_1, \ldots, \alpha_{k-1}) \in \R^{k-1}$, we have

\begin{equation}
    \label{div_of_gammas}
S_{\alpha} :=  \sum_{q \in \N } \gamma_q^{(k)}(\alpha) = \infty.\end{equation}
For $c > 0$, let
$A_c := \{\alpha \in [0,1]^{k-1}: S_{\alpha} > c\}, B_c = [0,1]\setminus A_c$ and
$A_{\infty} = \lim_{c \to \infty}A_c$. Assuming to the contrary of \eqref{div_of_gammas} that
$\lambda_{k-1}(A_{\infty}) < 1$, by continuity of measures, there are $C> 0,\varepsilon> 0$ such that
$\lambda_{k-1}(B_C) > \varepsilon$. We write 
\[
S_{\alpha}^Q := \sum_{q \leq Q}\gamma^{(k)}_q(\alpha)
\] 
and we note that 
\[
\lim_{Q \rightarrow \infty} \int_{[0,1]^{k-1}} S_{\alpha}^Q \,\mathrm{d} \alpha = \infty
\]
by \eqref{divergent_expected}.
By Proposition \ref{prop_var1}, for any $\delta > 0$, we find a $Q=Q(\delta)$ such that
\[
\int_{[0,1]^{k-1}} \left[S_{\alpha}^Q\right]^2\,\mathrm{d} \alpha -\left(\int_{[0,1]^{k-1}} S_{\alpha}^Q \,\mathrm{d} \alpha\right)^2 \leq \delta \left(\int_{[0,1]^{k-1}} S_{\alpha}^Q \,\mathrm{d} \alpha\right)^2,
\]
for all $Q \geq Q(\delta)$. We choose $\delta = \frac{\varepsilon}{4}$ and apply Chebyshev's inequality which yields
\[
\lambda_{k-1}\left(\left\{\alpha \in [0,1]^{k-1}: \left\lvert S_{\alpha}^Q - \int_{[0,1]^{k-1}} S_{\alpha}^Q \,\mathrm{d} \alpha \right\rvert > \frac{1}{2}\int_{[0,1]^{k-1}} S_{\alpha}^Q \,\mathrm{d} \alpha \right\}\right) \leq 4 \delta,
\]
provided that $Q \geq Q(\delta)$. For such $Q$, we have
$S_{\alpha}^Q > \frac{1}{2}\int_{[0,1]^{k-1}} S_{\alpha}^Q \,\mathrm{d} \alpha$ on a set of Lebesgue measure of at least $1 - 4\delta$. Choosing $Q$ large enough such that
$\int_{[0,1]^{k-1}} S_{\alpha}^Q \,\mathrm{d} \alpha > 2C$ implies $\lambda_{k-1}(B_C) \leq 4\delta < \varepsilon$, a contradiction.

\subsubsection{The variance estimate}

\begin{proof}[Proof of Proposition \ref{prop_var1}]
Let $\varepsilon > 0$ and define 
\[
f_{q,\varepsilon}^{(k)}(y):=  \begin{cases}
1 , & \text{ if } 0 \leq |y| \leq g_k(q) \left( 1 + \varepsilon \log \left(g_k(q)^{-1}\right) \right), \\
f_q^{(k)}(y),  & \text{ otherwise}.
\end{cases}
\]
Moreover, for $x \in \R^{k-1}$ we set $\gamma_{q,\varepsilon}^{(k)}(x):= \prod_{i=1}^{k-1} \sum_{(a,q)=1} f_{q,\varepsilon}^{(k)}( qx_i -a)$ and recall that $g_k(q)= \left( \frac{\psi(q) \varphi(q)}{q} \right)^{1/(k-1)}$. Clearly, for all $y \in \R$ we have $f_{q}^{(k)}(y) \leq f_{q, \varepsilon}^{(k)}(y)$ and hence we get the estimate
\begin{align*}
\sum_{q,r \leq Q} \int_{[0,1]^{k-1}} \gamma_q^{(k)}(x) \gamma_r^{(k)}(x)\,\mathrm{d}x & \leq \sum_{q,r \leq Q } \left( \sum_{(a,q)=1} \sum_{(b,r)=1} \int_0^1 f_{q, \varepsilon}^{(k)}(qy-a) f_{r, \varepsilon}^{(k)}(ry-b) \,\mathrm{d}y \right)^{k-1}.
\end{align*}
For every $K \in \N$ and every $q \in \N$, we can choose a finite sequence of points
\[g_k(q) \left( 1 + \varepsilon \log \left( g_k(q)^{-1} \right) \right) \leq x_{q,0} \leq  \ldots \leq  x_{q,K-1} \leq 2 g_k(q)^{1/2}\] such that
\[
f_{q,\varepsilon}^{(k)}(y) \leq  \frac{1}{K} \sum_{i=0}^{K-1} \mathds{1}_{[-x_{q,i}, x_{q,i}]}(x), \qquad 
f_{q,\varepsilon}^{(k)}(y)= \lim_{K \rightarrow  \infty} \frac{1}{K} \sum_{i=0}^{K-1} \mathds{1}_{[-x_{q,i}, x_{q,i}]}(x).
\]

\begin{figure}[ht!]
    \centering
\begin{tikzpicture}[xscale=2, yscale=1.2]
    \draw[<-] (-4,0.5) -- (0,0.5) node[left] {};
    \draw[->] (0,0.5) -- (4,0.5) node[left] {};
    \draw[->] (0,0.2) -- (0,5) node[above] {};

    \draw[domain=1:5/4.5,smooth,variable=\x,red] plot ({\x},{ 5/\x -0.5} ) node[right] {};
    \draw[domain=5/4.5:3.5,smooth,variable=\x,violet] plot ({\x},{ 5/\x -0.5} ) node[right] {};
    \draw[domain=-3.5:-5/4.5,smooth,variable=\x,violet] plot ({\x},{ 5/(-\x)-0.5} ) node[right] {};
    \draw[domain=-5/4.5:-1,smooth,variable=\x,red] plot ({\x},{ 5/(-\x)-0.5} ) node[right] {};
    \draw[domain=-1:1,smooth,variable=\x,violet] plot ({\x},{ 4.5} ) node[right] {};
    \draw[dashed, color=violet] (-3.5,5/3.5-0.5)--(-3.5,5/3.5-1);
    \draw[dashed, color=violet] (3.5,5/3.5-0.5)--(3.5,5/3.5-1);

    \draw[domain=-5/4.5:-1,smooth,variable=\x,blue] plot ({\x},{ 4.5} ) node[right] {};
    \draw[domain=1:5/4.5,smooth,variable=\x,blue] plot ({\x},{ 4.5} ) node[right] {};
    \draw[dashed, color=blue] (-5/4.5,4.5) -- (-5/4.5,4);
    \node at (-5/4.5,4.5) [circle,fill,blue,inner sep=1pt] {};

    \draw[dashed, color=blue] (5/4.5,4.5) -- (5/4.5,4);
    \node at (5/4.5,4.5) [circle,fill,blue,inner sep=1pt] {};
    

    \draw (-5/4.5,4.5)--(-1.2,4.5);
    \draw (5/4.5,4.5)--(1.2,4.5);
    
     \draw[dashed] (-1.2,4.5)--(-1.2,4);
     \draw[dashed] (1.2,4.5)--(1.2,4);

    \foreach \x in {1.5,2,...,4}
    \def\y{\x}
    \draw (0,\x) -- (5/\y,\x);

    \foreach \x in {1.5,2,...,4}
    \def\y{\x-0.5}
    \draw[dashed] (5/\x,\x) -- (5/\x,\y);
    
    \draw (3.7, 1) -- (0,1);
    \draw[dashed] (3.7,1) -- (3.7,0.5);

     \draw (-3.7, 1) -- (0,1);
    \draw[dashed] (-3.7,1) -- (-3.7,0.5);


    \foreach \x in {-1.5,-2,...,-4}
    \def\y{\x}
    \draw (0,-\x) -- (5/\y,-\x);

    \foreach \x in {-1.5,-2,...,-4}
    \def\y{\x-0.5}
    \draw[dashed] (5/\x,-\x) -- (5/\x,-\y);
    
     \draw (1,0.5) -- (1,0.4) node[below] {$g_{k}(q)$};
      \draw (3.5,0.5) -- (3.5,0.4) node[below] {$g_{k}(q)^{1/2}$};
     \draw (-3.7,0.5) -- (-3.7,0.4) node[below] {$-x_{q,K-1}$};
\end{tikzpicture}
\caption{The picture illustrates the relation of $f_q^{(k)}(y)$, $f_{q,\varepsilon}^{(k)}(y) $ and $\frac{1}{K} \sum_{i=0}^{K-1} \mathds{1}_{[-x_{q,i}, x_{q,i}]}(y) $.}
\label{drawing}
\end{figure}
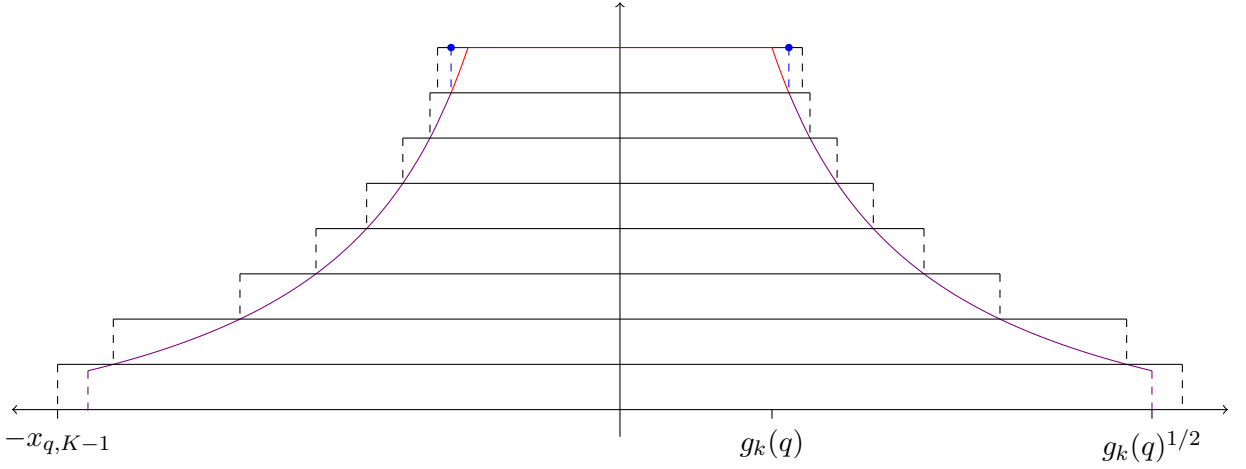

This leads to
\begin{align*}
 &\sum_{q,r \leq Q } \left( \sum_{(a,q)=1} \sum_{(b,r)=1} \int_0^1 f_{q,\varepsilon}^{(k)}(qy-a) f_{r,\varepsilon}^{(k)}(ry-b) \,\mathrm{d}y \right)^{k-1} \\
 \leq & \sum_{q,r \leq Q } \left( \sum_{(a,q)=1} \sum_{(b,r)=1} \frac{1}{K^2} \sum_{i,j =0}^{K-1} \int_0^1 \mathds{1}_{[- x_{q,i}, x_{q,i}]}(qy-a)\mathds{1}_{[-x_{r,j}, x_{r,j}]}(ry-b) \,\mathrm{d}y \right)^{k-1} 
 \\ = &\sum_{q,r \leq Q } \left( \frac{1}{K^2} \sum_{i,j =0}^{K-1} \int_0^1 \sum_{(a,q)=1}  \mathds{1}_{[-x_{q,i}, x_{q,i}]}(qy-a) \sum_{(b,r)=1} \mathds{1}_{[-x_{r,j}, x_{r,j}]}(ry-b) \,\mathrm{d}y \right)^{k-1} \\
  = &\sum_{q,r \leq Q } \left( \frac{1}{K^2} \sum_{i,j =0}^{K-1} \lambda_1( A_{q,i} \cap A_{r,j}) \right)^{k-1},
\end{align*}
where $A_{q,i}:= \bigcup_{(a,q)=1} \left[  \frac{a}{q}  - \frac{x_{q,i}}{q},  \frac{a}{q}  +\frac{x_{q,i}}{q} \right]$. In the last step, we used that the intervals $\left[  \frac{a}{q}  - \frac{x_{q,i}}{q},  \frac{a}{q}  +\frac{x_{q,i}}{q} \right]$ are disjoint for distinct $a$, since all $x_{q,i} \leq 2 g_k(q)^{1/2} \leq 1/2$, where the latter is ensured by \eqref{Eq_def_rho}.
Applying \cite[Lemma 5]{ABH2023}, we obtain the overlap estimate
\begin{equation*}
    \lambda_1(A_{q,i} \cap A_{r,j}) \leq \left( 1 + O \left( \log (D+2)^{-1} \right) \right)\lambda_1( A_{q,i}) \lambda_1(A_{r,j}) \prod_{ \substack{p | \frac{qr}{(q,r)^2} \\ p > A}} \left( 1 + \frac{1}{p-1} \right),
\end{equation*}
where
\begin{align}\notag
D & =D(q,r,i,j)= \frac{\max \{ r x_{q,i}, q x_{r,j} \}}{\gcd(q,r)},\\
\label{def_A}A & =A(D):= \exp\left(\frac{\log(D + 100)\log \log \log (D + 100)}{8 \log \log (D + 100)} + 1\right)
\end{align}
are is as in \cite[Equation $(10)$]{ABH2023}. We note that $A$ is monotonically increasing as a function in $D$, and $D(q,r,i,j) \geq \bar{D}(q,r)$ with 
\begin{align*}
\bar{D}(q,r) &:= \frac{\max\{ r \psi_{\varepsilon}^{(k)}(q), q \psi_{\varepsilon}^{(k)}(r)\}}{\gcd(q,r)}, \quad \psi_{\varepsilon}^{(k)}(q):= \left[g_k(q) \left(1 + \varepsilon \log \left( g_k(q)^{-1} \right)\right) \right]^{k-1}.
\end{align*}
Moreover, we introduce the quantity
\[
P_{\psi}(q,r) := \prod_{ \substack{p | \frac{qr}{(q,r)^2} \\ p > \bar{A}(q,r)}} \left( 1 + \frac{1}{p-1} \right),
\]
where $\bar{A}(q,r) := A(\bar{D})$.
This gives us the estimate
\[
\lambda_1(A_{q,i} \cap A_{r,j}) \leq \lambda_1( A_{q,i}) \lambda_1(A_{r,j})  \left( 1 + O \left( \log (\bar{D}(q,r)+2)^{-1} \right) \right)P_{\psi}(q,r).
\]
Furthermore, note that
\begin{align*}
\lim_{K \rightarrow \infty}  \left( \frac{1}{K^2} \sum_{i,j=0}^{K-1}\lambda_1( A_{q,i}) \lambda_1(A_{r,j}) \right)^{k-1} &= \int_{[0,1]^{k-1}} \gamma_{q,\varepsilon}^{(k)}(x) \,\mathrm{d}x \int_{[0,1]^{k-1}} \gamma_{r,\varepsilon}^{(k)}(x) \,\mathrm{d}x \\
& \leq (1 + \varepsilon)^{2(k-1)}  \int_{[0,1]^{k-1}} \gamma_q^{(k)}(x) \,\mathrm{d}x \int_{[0,1]^{k-1}} \gamma_r^{(k)}(x) \,\mathrm{d}x.
\end{align*}
Hence we get
\begin{align*}
    &\sum_{q,r \leq Q} \int_{[0,1]^{k-1}} \gamma_q^{(k)}(x) \gamma_r^{(k)}(x) \,\mathrm{d}x \\ \leq  &\sum_{q,r \leq Q} \left( \lim_{K \rightarrow \infty} \frac{1}{K^2} \sum_{i,j=0}^{K-1}\lambda_1( A_{q,i}) \lambda_1(A_{r,j}) \left( 1 + O \left( \log (\bar{D}(q,r)+2)^{-1} \right) \right) \prod_{ \substack{p | \frac{qr}{(q,r)^2} \\ p > \bar{A}(q,r)}} \left( 1 + \frac{1}{p-1} \right) \right)^{k-1} \\
    \leq &(1 + \varepsilon)^{2(k-1)} \sum_{q,r \leq Q} \left(\int_{[0,1]^{k-1}} \gamma_q^{(k)}(x) \,\mathrm{d}x\right)\left( \int_{[0,1]^{k-1}} \gamma_r^{(k)}(x) \,\mathrm{d}x\right)  \left[\left( 1 + O \left( \log (\bar{D}(q,r)+2)^{-1} \right) \right) P_{\psi}(q,r) \right]^{k-1}.
\end{align*}

Now let 
\[
\chi(q) := \frac{q}{\varphi(q)}\int_{[0,1]^{k-1}} \gamma_q^{(k)}(x)\,\mathrm{d}x = \left[ g_k(q)\left(1 + \frac{1}{2} \log \left(g_k(q)^{-1} \right )\right) \right]^{k-1} \leq \frac{1}{2},
\]
where the last inequality is true again by \eqref{Eq_def_rho}.
We observe that $\chi(q) \asymp_{\varepsilon} \psi_{\varepsilon}^{(k)}(q)$ and thus 
Proposition \ref{D_A_equiv} (stated below) gives us

\[\begin{split}
 &\sum_{q,r \leq Q} \int_0^1 \gamma_q^{(k)}(x) \gamma_r^{(k)}(x) \,\mathrm{d}x
  \\  & \leq (1 + \varepsilon)^{2(k-1)} \sum_{q,r \leq Q} \chi(q) \frac{\varphi(q)}{q} \chi(r) \frac{\varphi(r)}{r}  \left[ \left( 1 + O \left( \log (D_{\chi}(q,r)+2)^{-1} \right) \right) P_{\chi}(q,r) \right]^{k-1},
\end{split}
\]
where \[P_{\chi}(q,r)= \prod\limits_{ \substack{p | \frac{qr}{(q,r)^2} \\ p > A_{\chi}(q,r)}} \left( 1 + \frac{1}{p-1} \right)\]
and $A_{\chi}(q,r) = A(D_{\chi}(q,r))$ with 
$D_{\chi}(q,r) = \max\left\{\frac{r \chi(q)}{(q,r)},\frac{q\chi(r)}{(q,r)}\right\}$.
Writing 
\[
X(Q) := \sum_{q \leq Q} \chi(q)\frac{\varphi(q)}{q} ,
\]
we can now follow, up to insignificant changes, the proof of \cite[Theorem 2]{ABH2023} with the same decomposition into sets $\mathcal{E}_1 - \mathcal{E}_5$ for some $C > 11$ to establish  
\[
\sum_{q,r \leq Q} \int_{[0,1]^{k-1}} \gamma_q^{(k)}(x) \gamma_r^{(k)}(x) \,\mathrm{d}x \leq (1+ \varepsilon)^{2(k-1)} X(Q)^2 \left(1 + O_{\varepsilon}(\log (X(Q))^{-1})\right).
\]
Since 
\begin{align*}
X(Q) & = \sum \limits_{q \leq Q} \int_{[0,1]^{k-1}} \gamma_q^{(k)}(x)  \,\mathrm{d}x \\
&=\sum \limits_{q \leq Q} \left( \frac{\varphi(q)}{q} \right)^{k-1} \left(\int_{0}^1 f_q^{(k)}(y) \,\mathrm{d}y \right)^{k-1} \\
& \gg \sum \limits_{q \leq Q} \left( \frac{\varphi(q)}{q} \right)^{k-1} g_k(q)^{k-1} \left(\log g_k(q) \right)^{k-1} \\
& \gg  \sum \limits_{q \leq Q} \left( \frac{\varphi(q)}{q} \right)^{k} \psi(q) \left( \log \frac{q}{\varphi(q) \psi(q) } \right)^{k-1}, 
\end{align*}
we see that by assumption, $X(Q)$ tends to infinity as $Q \to \infty$. Therefore, we find $Q_{\varepsilon} \in \N$ large enough such that for all $Q \geq Q_{\varepsilon}$, we have

\[
\sum_{q,r \leq Q} \int_{[0,1]^{k-1}} \gamma_q^{(k)}(x) \gamma_r^{(k)}(x)  \,\mathrm{d}x \leq (1 + 2\varepsilon)^{2(k-1)}\left(\sum\limits_{q \leq Q} \int_{[0,1]^{k-1}} \gamma_q^{(k)}(x)  \,\mathrm{d}x\right)^2.
\]
Thus, the statement follows by choosing $\varepsilon$ small enough such that 
\[
(1 + 2\varepsilon)^{2(k-1)} \leq (1+ \delta).
\]
\end{proof}

\begin{proposition}\label{D_A_equiv}
    Let $\psi,\chi: \N \to [0,1/2]$ be two functions such that there exist constants $C_1,C_2 > 0$ with 
    \[C_1\chi(q) \leq \psi(q) \leq C_2\chi(q), \quad q \in \N.\]
    Further, denote by $D_{\psi}(q,r) = \max\left\{\frac{r\psi(q)}{(q,r)},\frac{q\psi(r)}{(q,r)}\right\}, 
     D_{\chi}(q,r) = \max\left\{\frac{r\chi(q)}{(q,r)},\frac{q\chi(r)}{(q,r)}\right\}$
    and $A_{\psi} = A(D_{\psi}),A_{\chi} = A(D_{\chi})$ with $A$ as in \eqref{def_A}.
    Then we have
    \begin{itemize}
        \item[] $(i)$ \quad  $\log(D_{\psi}+2) = \Theta_{C_1,C_2} \left( \log (D_{\chi}+2) \right)$,
        \item[] $(ii)$ \quad $\prod\limits_{\substack{p \mid \frac{qr}{(q,r)^2}\\ p > A_{\psi}}}\left(1 + \frac{1}{p}\right)
        = \prod\limits_{\substack{p \mid \frac{qr}{(q,r)^2}\\ p > A_{\chi}}}\left(1 + \frac{1}{p}\right)\left(1 + O_{C_1,C_2}\left(\frac{1}{\log (D_{\psi}+2)} \right)\right),
        $ 
        \item[] $(iii)$ \quad $\prod\limits_{\substack{p \mid \frac{qr}{(q,r)^2}\\ p > A_{\psi}}}\left(1 + \frac{1}{p-1}\right)
        = \prod\limits_{\substack{p \mid \frac{qr}{(q,r)^2}\\ p > A_{\psi}}}\left(1 + \frac{1}{p}\right)\left(1 + O\left(\frac{1}{\log (D_{\psi}+2)^{100}}\right)\right).
        $
    \end{itemize}
\end{proposition}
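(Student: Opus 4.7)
The plan is to treat the three parts separately. Part $(i)$ is immediate from the hypothesis, which yields $C_1 D_\chi(q,r) \leq D_\psi(q,r) \leq C_2 D_\chi(q,r)$: for $D_\chi$ bounded both logarithms are bounded, and for $D_\chi$ large, $\log D_\psi = \log D_\chi + O_{C_1,C_2}(1)$, so the two logarithms are of the same order. The non-trivial content lies in $(ii)$ and $(iii)$; both rest on first controlling the difference $\log A_\psi - \log A_\chi$.

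For $(ii)$, I would write $L_1 := \log(D_\chi+100)$, $L_2 := \log L_1$, $L_3 := \log L_2$, so that $\log A_\chi - 1 = L_1 L_3/(8L_2)$. Under the perturbation $D_\psi = c D_\chi$ with $c \in [C_1, C_2]$, a first-order expansion of the map $D \mapsto \log(D+100)\log\log\log(D+100)/\log\log(D+100)$ gives
\[
\log A_\psi - \log A_\chi = O_{C_1,C_2}(L_3/L_2),
\]
the dominant contribution coming from the $O(1)$ shift in $\log(D+100)$ multiplied by the partial derivative $L_3/(8L_2)$; the shifts in $L_2$ and $L_3$ are of size $O(1/L_1)$ and $O(1/(L_1L_2))$ respectively and contribute strictly smaller terms. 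Assume without loss of generality $A_\psi \leq A_\chi$ (the opposite case is symmetric). The ratio of the two products equals $\prod_{A_\psi < p \leq A_\chi,\ p \mid qr/(q,r)^2}(1+1/p)$; taking logarithms and using $\log(1+x) \leq x$, this is at most $\sum_{A_\psi < p \leq A_\chi} 1/p$. By Mertens' theorem the latter equals $\log\log A_\chi - \log\log A_\psi + O(1/\log A_\psi)$, and the mean value theorem together with $\log A_\psi \asymp L_1 L_3/L_2$ gives $\log\log A_\chi - \log\log A_\psi \ll (L_3/L_2)/(L_1 L_3/L_2) = 1/L_1$, while $1/\log A_\psi \ll L_2/(L_1 L_3)$ is even smaller. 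Exponentiating and invoking $(i)$ to replace $L_1$ by $\log(D_\psi+2)$ delivers the claim.

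For $(iii)$, I would exploit the elementary identity $1 + 1/(p-1) = (1+1/p)(1+1/(p^2-1))$, which reduces the claim to $\prod_{p > A_\psi,\ p \mid qr/(q,r)^2}(1 + 1/(p^2-1)) = 1 + O(1/A_\psi)$; this in turn follows from $\sum_{p > A_\psi} 1/(p^2-1) = O(1/A_\psi)$. The required bound $O(1/\log(D_\psi+2)^{100})$ then amounts to $\log A_\psi \gg 100 L_2$, which holds since $\log A_\psi \asymp L_1 L_3/L_2$ and $L_1 \gg L_2^2/L_3$ once $D_\psi$ exceeds some absolute threshold; for smaller $D_\psi$, the global bound $\prod_{p}(1+1/(p^2-1)) = \zeta(2) = \pi^2/6$ shows that both sides of $(iii)$ differ by a bounded absolute factor, absorbed by the $O$-term. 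The principal technical obstacle is the expansion in $(ii)$: with three iterated logarithms inside $A(D)$, one must carefully track how the $O(1)$ perturbation in the innermost $\log$ propagates, to ensure only the prescribed $O(L_3/L_2)$ remainder survives; once this is in hand, the rest is a routine application of Mertens' theorem and Taylor expansion.
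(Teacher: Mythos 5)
Your overall architecture matches the paper's: part $(i)$ is immediate, part $(iii)$ is handled exactly as in the paper via the factorisation $1+\frac{1}{p-1}=\left(1+\frac{1}{p}\right)\left(1+\frac{1}{p^2-1}\right)$ together with $\sum_{p>A_{\psi}}\frac{1}{p^2-1}=O(1/A_{\psi})$ (your extra discussion of bounded $D_{\psi}$ is fine), and for part $(ii)$ you, like the paper, reduce to the primes lying between $A_{\psi}$ and $A_{\chi}$ and estimate $\sum 1/p$ over that range. In your notation $L_1=\log(D_{\chi}+100)$, $L_2=\log L_1$, $L_3=\log L_2$, your expansion $\log A_{\psi}-\log A_{\chi}=O_{C_1,C_2}(L_3/L_2)$ is correct and is the same input the paper uses in the form $\log A_{c\chi}=\log A_{\chi}\left(1+O(1/\log(D_{\chi}+2))\right)$.

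However, there is a genuine gap at the Mertens step in $(ii)$. Since $\log A_{\psi}\asymp L_1L_3/L_2$, the error term you import from the classical Mertens theorem is $O(1/\log A_{\psi})\asymp L_2/(L_1L_3)$, and your assertion that this is ``even smaller'' than the main term $1/L_1$ is false: the ratio of the two quantities is $L_2/L_3=\log L_1/\log\log L_1\to\infty$, so the Mertens error \emph{dominates}. As written, your argument only yields $1+O_{C_1,C_2}\left(L_2/(L_1L_3)\right)$, which is strictly weaker than the claimed $1+O_{C_1,C_2}\left(1/\log(D_{\psi}+2)\right)$. This is precisely where the paper is careful: it quotes Mertens' theorem with the PNT-strength error term $O\left(\exp(-a\sqrt{\log x})\right)$ (Rosser--Schoenfeld), which leads to an admissible error $O\left(1/\log(A_{\chi})^2\right)\ll 1/\log(D_{\chi}+2)$. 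Your proof can be repaired either by invoking such a stronger form of Mertens, or by bounding the number of primes in the short interval $(A_{\psi},A_{\chi}]$ directly (e.g.\ via Brun--Titchmarsh or a Chebyshev-type bound), which gives $\sum_{A_{\psi}<p\le A_{\chi}}1/p\ll (L_3/L_2)/\log A_{\psi}\asymp 1/L_1$; but the comparison as stated does not establish the proposition.
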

\begin{proof}
    The claim in $(i)$ is obvious, so we start with the proof of $(ii)$. Since $\psi$ and $\chi$ are (up to constants) equivalent, it suffices to show that
    \[
    \prod_{\substack{p | \frac{qr}{(q,r)^2} \\ A_{C_1\chi} \leq p \leq A_{C_2 \chi}}} \left( 1 + \frac{1}{p} \right) = 1 + O_{C_1, C_2}\left( \frac{1}{\log (D_{\chi}+2)} \right).
    \]
    Employing $(i)$, we get
    \[
    \prod_{\substack{p | \frac{qr}{(q,r)^2} \\ A_{C_1\chi} \leq p \leq A_{C_2 \chi}}}\left(1 + \frac{1}{p}\right)
    \leq \exp\left(\sum_{A_{C_1 \chi} \leq p \leq A_{C_2\chi}} \frac{1}{p}\right)
    \leq \exp\left(
    \log\left(\frac{\log A_{C_2\chi}}{\log A_{C_1 \chi}} + O\left(\frac{1}{\log(A_{\chi})^2}\right)\right)
    \right)
    \]
    where we have used Mertens' Theorem in the form of (for the error term see, e.g., \cite{Rosser_Schoenfeld_1962})
    \[\sum_{p \leq x} \frac{1}{p} = \log \log x + M + O\left(\exp\big(-a \sqrt{\log (x)}\big)\right),\]
    with $M$ being an absolute constant and $a > 0$.
    For any fixed constant $c > 0$, it follows from the definition of $A_{\chi}$ that
    \[
    \log A_{c \chi} = \log A_{\chi} \left( 1+O \left( \frac{1}{\log (D_{\chi }+2)} \right) \right), \qquad  \frac{1}{\log A_{\chi}^2}=O \left( \frac{1}{\log (D_{\chi}+2)}\right) 
    \]
    and hence,
    \[
    \exp\left(
    \log\left(\frac{\log A_{C_2\chi}}{\log A_{C_1 \chi}} + O\left(\frac{1}{\log(A_{\chi})^2}\right)\right)
    \right) = \exp\left(\log\left(1 + O\left(\frac{1}{\log (D_{\chi}+2)}\right)\right)\right) =1 + O\left(\frac{1}{\log (D_{\chi}+2)}\right).
    \]
This shows the claim in $(ii)$. For $(iii)$, we note that 
\[
\prod_{\substack{p \mid \frac{qr}{(q,r)^2}\\ p > A_{\psi}}}\left(1 + \frac{1}{p-1}\right)= \prod_{\substack{p \mid \frac{qr}{(q,r)^2}\\ p > A_{\psi}}}\left(1 + \frac{1}{p}\right) \prod_{\substack{p \mid \frac{qr}{(q,r)^2}\\ p > A_{\psi}}} \left( 1 + \frac{1}{p^2-1} \right).
\]
Since
\begin{align*}
 \prod_{\substack{p \mid \frac{qr}{(q,r)^2}\\ p > A_{\psi}}} \left( 1 + \frac{1}{p^2-1} \right) & = \exp \left(  O \left( \sum_{p > A_{\psi}} \frac{1}{p^2} \right) \right) \\
 & = \exp \left(  O \left( \frac{1}{A_{\psi}} \right) \right) \\
 & = 1 +O \left( \frac{1}{\log (D_{\psi}+2)^{100}} \right),
\end{align*}
with an absolute implied constant, the statement follows.
\end{proof}

\subsection{Proof of Corollaries \ref{littlewood_subsequence},\ref{Hausdorff_thm} and \ref{Hausdorff_cor}}

\begin{proof}[Proof of Corollary \ref{littlewood_subsequence}]
    Let $\varepsilon > 0$ fixed and let $\psi(q) = \psi_{A,\varepsilon}(q) := \mathds{1}_{[q \in A]}\frac{\varepsilon}{q}$. Assume first that
$\frac{\#\{n \leq N: n \in A\}}{N} \gg \frac{\log \log N}{(\log N)^2}$.    
By Corollary \ref{weak_thm}, it suffices to show that
\[\sum_{q \in \N} \mathds{1}_{[q \in A]}\frac{1}{q (\log \log q)^2}\log(q(\log\log q)^2) 
\gg \sum_{q \in \N} \mathds{1}_{[q \in A]}\frac{\log(q)}{q (\log \log q)^2} =
\infty.\]
By summation by parts and the mean value theorem this holds true since

\[\sum_{q \in N} \frac{1}{q\log q \log \log q} = \infty.\]

If $\frac{\#\{n \leq N: n \in A\}}{N} \gg \frac{\log \log N}{(\log N)^2}$, we use the fact that the convergence part of Conjecture \ref{conj_normal} holds true (for a proof see \cite{Beresnevich_Haynes_Velani2013}), and thus we only need to show that 
\[\sum_{q \in \N} \mathds{1}_{[q \in A]}\frac{1}{q}\log(q) < \infty,\]
which follows again after applying summation by parts.
\end{proof}

\begin{proof}[Proof of Corollaries \ref{Hausdorff_thm} and \ref{Hausdorff_cor}]
It was already shown by Hussain and Simmons in \cite{Mumtaz_Simmons2018} that even for non-monotonic $\psi$
\begin{equation}
    \label{upper_bound_HD}
\dim_H(S_k^{\times}) \leq (k-1) + d(\psi),
\end{equation}
and since clearly $D_k^{\times}(\psi) \subseteq S_k^{\times}(\psi)$, the proof of Corollary \ref{Hausdorff_thm} is reduced to showing
$\dim_H(D_k^{\times}) \geq (k-1) + d(\psi)$. To prove this, we make use of the Hausdorff version of the Duffin-Schaeffer conjecture due to Beresnevich-Velani \cite{beresnevich_velani_2006}, who proved that
\begin{equation}\label{HD_DS}\dim_H(D_1^{\times}(\psi)) = \min\{d(\psi),1\}.
\end{equation}
In order to compute the Hausdorff-dimension of $D_k^{\times}(\psi)$ for arbitrary $k \geq 1$, we need the following claim: For any $\varepsilon > 0$, there exists a constant $c= c_{\varepsilon} > 0$ such that for all $q \in \N$
\begin{equation}
    \label{max_distance_coprime}
|| q \alpha _i ||' \leq c_{\varepsilon} q^{\varepsilon}.
\end{equation}
 This follows from
\[
\begin{split}\sum_{\substack{n \leq x \\ (n,q) = 1}}1 &= \sum_{n \leq x} \sum_{d \mid (n,q)}\mu(d)
    = \sum_{d \mid q} \mu(d)\sum_{\substack{n \leq x \\ d \mid n }} 1 = \sum_{d \mid q} \mu(d) \left(\frac{x}{d} + O(1)\right)
    \\&= x \sum_{d \mid q} \frac{\mu(d)}{d} + O(\tau(q)) = x\frac{\varphi(q)}{q} + O(\tau(q)),
\end{split}
\]

which implies that for $(y-x) \gg_{\varepsilon} q^{\varepsilon}$, we have
\[
\sum_{\substack{x \leq n \leq y\\ (n,q) = 1}} 1 > 0,
\]   

thus \eqref{max_distance_coprime} follows. Hence we have
\begin{align*}
D_k^{\times}(\psi) &= \left\{ \alpha \in [0,1]^k  : \prod_{i=1}^k || q \alpha_i ||' \leq \psi(q) \right\} \\
& \supseteq\left\{
\alpha \in [0,1]^k  : || q \alpha_1 ||' \leq \frac{\psi(q)}{c^{k-1}q^{(k-1) \varepsilon}} \right\} \\
& = [0,1]^{k-1} \times D_1^{\times}(\psi/f_{\varepsilon}),
\end{align*}
where $f_{\varepsilon}(q) := c^{k-1}q^{(k-1)\varepsilon}$. By elementary properties of the Hausdorff dimension, the definition of $d(\psi)$ and  \eqref{HD_DS}, we get that for all $\varepsilon > 0$,
\[\dim_H(D_k^{\times}(\psi)) \geq (k-1) + \dim_H\left(D_1^{\times}\left(\frac{\psi}{f_{\varepsilon}}\right)\right)
\geq  (k-1) + \dim_H(D_1^{\times}\left(\psi\right)) - \varepsilon,
\]
where we used \eqref{HD_DS} and the fact that $\psi$ is absolutely bounded.
Therefore, it follows that
$\dim_H(D_k^{\times}(\psi)) \geq (k-1) + d(\psi)$, which proves Corollary \ref{Hausdorff_thm}. Corollary 
\ref{Hausdorff_cor} can now be deduced from $q/\varphi(q) \ll_{\varepsilon} q^{\varepsilon}$ and an analogous argumentation.
\end{proof}

\subsection*{Acknowledgements} 
The authors would like to thank Christoph Aistleitner, Bence Borda, Victor Beresnevich, Niclas Technau, and Sanju Velani for various valuable discussions and comments on an earlier version of this manuscript.
LF was supported by the Austrian Science Fund (FWF) Project P 35322. MH was supported by the EPSRC grant EP/X030784/1.
A part of this work was supported by the Swedish Research Council under grant no. 2016-06596 while MH was in residence at Institut Mittag-Leffler in Djursholm, Sweden in 2024.

\bibliographystyle{plain}
\bibliography{bibliographie.bib}
\vspace{1cm}

\author{Lorenz Fr\"uhwirth}
{\footnotesize

Graz University of Technology

Steyrergasse 30, 8010 Graz, Austria

Email: \texttt{fruehwirth@math.tugraz.at}

}
\vspace{5mm}

\author{Manuel Hauke}
{\footnotesize 

University of York

Department of Mathematics

YO10 5DD York, United Kingdom

Email: \texttt{manuel.hauke@york.ac.uk}}

\end{document}